\documentclass[reqno]{amsart}

\usepackage{amsmath,amssymb,graphicx
}
\usepackage{verbatim} 

\usepackage[latin1]{inputenc}
\usepackage{enumerate}
\usepackage{hyperref}
\usepackage{amsthm}

\graphicspath{{Figs/}}
\newtheorem{theorem}{Theorem}[section]
\newtheorem*{theorem*}{Theorem}
\newtheorem{proposition}[theorem]{Proposition}
\newtheorem{corollary}[theorem]{Corollary}
\newtheorem{lemma}[theorem]{Lemma}
\theoremstyle{definition}
\newtheorem{remark}[theorem]{Remark}
\newtheorem{definition}[theorem]{Definition}



\newcommand{\Gen}{G}

\newcommand{\Cylset}{\mathcal{O}}
\newcommand{\Cyl}{O}

\newcommand\touch[1]{\check{{#1}}}

\renewcommand{\H}{\operatorname{Heap}}

\newcommand{\Aaff}{\aff{A}}

\newcommand{\aff}[1]{\widetilde{#1}}

\newcommand\qbi[3]{{{#1}\atopwithdelims[]{#2}}_{#3}}
\newcommand\bi[2]{{{#1}\atopwithdelims(){#2}}}

\allowdisplaybreaks

\begin{document}

\title{Long fully commutative elements in affine Coxeter groups}

\author[Fr\'ed\'eric Jouhet]{Fr\'ed\'eric Jouhet}
\address{Institut Camille Jordan, Universit\'e Claude Bernard Lyon 1,
69622 Villeurbanne Cedex, France}
\email{jouhet@math.univ-lyon1.fr}
\urladdr{http://math.univ-lyon1.fr/{\textasciitilde}jouhet}

\author[Philippe Nadeau]{Philippe Nadeau}
\address{CNRS, Institut Camille Jordan, Universit\'e Claude Bernard Lyon 1,
69622 Villeurbanne Cedex, France}
\email{nadeau@math.univ-lyon1.fr}
\urladdr{http://math.univ-lyon1.fr/{\textasciitilde}nadeau}

\thanks{The authors thank Fran\c{c}ois Bergeron for initiating this research while both of them were staying at LaCIM }

\date{\today}

\subjclass[2010]{}

\keywords{Fully commutative elements, Coxeter groups, generating functions, heaps, $q$-binomial coefficients, roots of unity.}

\begin{abstract}
An element of a Coxeter group $W$ is {\em fully commutative} if any two of its reduced decompositions are related by a series of transpositions of adjacent commuting generators. In the preprint \emph{Fully commutative elements  in finite and affine Coxeter groups}, Biagioli, Jouhet and Nadeau proved among other things that, for each irreducible affine Coxeter group, the sequence counting fully commutative elements with respect to length is ultimately periodic. In the present work, we study this sequence in its periodic range for each of these groups, and in particular we determine the minimal period. We also observe that in type $\Aaff$ we get an instance of the cyclic sieving phenomenon. 
\end{abstract}

\maketitle


\section*{Introduction}
\label{sec:intro}

Let $W$ be a Coxeter group. An element $w \in W$ is said to be {\em fully commutative} if any reduced expression for $w$ can be obtained from any other one by transposing adjacent pairs of commuting generators. Fully commutative elements were extensively studied by Stembridge in a series of papers~\cite{St1,St2,St3} where, among others, he classified the Coxeter groups having a finite number of fully commutative elements and enumerated them in each case. It is known that fully commutative elements in Coxeter groups index a basis for a quotient of the associated (generalized) Temperley--Lieb algebra (\cite{Fan,Graham}).

If $W^{FC}$ denotes the subset of fully commutative (FC) elements of $W$, let $W^{FC}_\ell$ be the number of FC elements of Coxeter length $\ell$.
 In the case of the affine symmetric group, Hanusa and Jones~\cite{HanJon} proved that the corresponding counting sequence (or \emph{growth function}) $(W^{FC}_\ell)_{\ell\geq 0}$ is ultimately periodic. In \cite{BJN}, Biagioli and the two authors generalized these results to all finite or affine Coxeter groups, by using the theory of \emph{heaps} and encoding fully commutative elements by various classes of lattice walks. One of the results is the following.
\begin{theorem*}[\cite{BJN}]
For each irreducible, classical affine Coxeter group $W$, the growth function $(W^{FC}_\ell)_{\ell\geq 0}$ is ultimately periodic with following period:
  \[\begin{array}{ l || c|c|c|c}
    \textsc{Affine Type} &\aff{A}_{n-1}&\aff{C}_n&\aff{B}_{n+1}&\aff{D}_{n+2}\\ \hline
    \textsc{Periodicity} &n&n+1&(n+1)(2n+1)&n+1 \\
   \end{array}\]
\end{theorem*}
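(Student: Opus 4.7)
The plan is to use the heap-theoretic models for $W^{FC}$ developed in the present paper, building on the encodings of \cite{BJN}. In each classical affine type, fully commutative elements are in length-preserving bijection with a class of lattice walks: cylindrical (abacus-like) walks for $\aff{A}_{n-1}$, walks in a bounded strip for $\aff{C}_n$, and walks with extra branch decorations for $\aff{B}_{n+1}$ and $\aff{D}_{n+2}$. The growth generating function $F(q) := \sum_\ell W^{FC}_\ell q^\ell$ is then computable -- by transfer-matrix methods, kernel arguments, or a direct heap decomposition -- as a rational function of $q$. Since ultimate periodicity of the sequence $(W^{FC}_\ell)$ with period $p$ is equivalent, via partial fractions, to the statement that every pole of $F(q)$ is a $p$-th root of unity (the multiplicities only contribute to the bounded pre-periodic segment), the strategy in each type is to compute $F(q) = P(q)/D(q)$ in reduced form and show that $D(q)$ divides $1-q^p$ for $p$ as in the table.

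Concretely, the cleanest case is $\aff{A}_{n-1}$: the cylindrical walk model admits a natural length-$n$ translation (a ``full stroke'' that advances once around the cylinder), and this translation yields a bijection between FC elements of length $\ell$ and of length $\ell+n$ as soon as $\ell$ is large enough that the underlying heap already touches every generator. The analogous argument gives period $n+1$ for $\aff{C}_n$ coming from the strip width, and, with a mild twist at the fork of the Dynkin diagram to account for the two short legs, also period $n+1$ for $\aff{D}_{n+2}$. I would thus handle these three types in parallel by isolating a ``loop insertion'' on the walk model and verifying that above an explicit threshold in $\ell$ it is a bijection onto its image.

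The main obstacle is $\aff{B}_{n+1}$, whose Dynkin diagram has two different ends, producing two distinct natural periodicities: one of size $n+1$ coming from translating the walk along the strip, and one of size $2n+1$ coming from a parity statistic attached to the asymmetric short branch (roughly, how many times the walk has looped past the branch point). Since $\gcd(n+1,2n+1)=1$, their least common multiple is exactly $(n+1)(2n+1)$. I would formalise this by introducing a bivariate generating function tracking both the length and the branch statistic, showing that $F(q)$ lies in $\mathbb{Z}[q,(1-q^{n+1})^{-1},(1-q^{2n+1})^{-1}]$, whence $D(q) \mid 1-q^{(n+1)(2n+1)}$. An alternative, purely combinatorial, route would construct two commuting loop insertions of respective lengths $n+1$ and $2n+1$ on $\aff{B}_{n+1}$-heaps and combine them to exhibit the required bijection $W^{FC}_\ell \to W^{FC}_{\ell + (n+1)(2n+1)}$ for $\ell$ beyond a threshold. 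The delicate point, and likely the most technical step, is checking that these two operations interact cleanly on the short branch without accidentally breaking the full-commutativity condition.
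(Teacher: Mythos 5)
Your plan is essentially the route of the cited preprint [BJN], whose result this statement is (the present paper only quotes it and recalls the relevant formulas): encode FC elements by heaps, put the long ones in length-preserving bijection with families of lattice walks, obtain $W^{FC}(q)$ as a rational series whose poles are roots of unity, and read off the period via partial fractions --- this is exactly what \eqref{eq:Aaffine}, \eqref{eq:Caffine}, \eqref{eq:Btilde} and \eqref{eq:Dtilde} encapsulate, and your ``insert a full stroke around the cylinder'' in type $\aff{A}_{n-1}$ is precisely the mechanism behind the factor $1/(1-q^{n})$ (it reappears in the proof of Proposition~\ref{prop:aaffine}). The one point where your conception differs is $\aff{B}_{n+1}$: in the actual classification the two periodicities do not coexist on the same elements, so the ``delicate interaction on the short branch'' you anticipate simply does not arise. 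The long FC elements there split into disjoint infinite families: alternating heaps, contributing $2q^{n+1}\touch{\Gen}_n(q)/(1-q^{n+1})$, a thin zigzag family contributing $q^{2(2n+1)}/(1-q^{2n+1})$, and a $1$-periodic piece; the period $(n+1)(2n+1)$ then comes from adding an $(n+1)$-periodic and a $(2n+1)$-periodic sequence, with $\gcd(n+1,2n+1)=1$. Your primary route (a generating function lying in $\mathbb{Z}[q,(1-q^{n+1})^{-1},(1-q^{2n+1})^{-1}]$) reaches the same conclusion and is sound, but the alternative of two commuting loop insertions on a single $\aff{B}_{n+1}$-heap would not be realizable literally, since an alternating heap admits only the $(n+1)$-insertion and a zigzag heap only the $(2n+1)$-insertion; the right move is to keep the families separate rather than force the operations to commute. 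Note finally that the table does not claim minimality of these periods --- that refinement is the subject of the present paper (Propositions~\ref{prop:exactperiodaaffine} and~\ref{prop:exactperiodcaffine}, Corollary~\ref{coro:exactperiodbdaffine}) --- so showing that every pole order divides the stated $p$, as you propose, is indeed all that is required.
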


In fact the full generating functions $W^{FC}(q):=\sum_{w\in W^{FC}}q^{\ell(w)}$, for $W$ affine or finite, were computed in~\cite{BJN}, as was the precise start of periodicity.
Similar results are proved in~\cite{BJN_families} for the subset of $W^{FC}$ of involutions. In particular, the corresponding growth functions turn out to be also ultimately periodic with the following periods: 
  \[\begin{array}{ l || c|c|c|c}
    \textsc{Affine Type} &\aff{A}_{2n-1}&\aff{C}_n&\aff{B}_{n+1}&\aff{D}_{n+2} \\ \hline
    \textsc{Periodicity} &2n&2(n+1)&2(n+1)(2n+1)&2(n+1)\\
   \end{array}\]

In view of these results, a natural question arises regarding the minimal periods of all these growth functions. In the present paper, we will determine them for all the classical affine types. To this aim, we will use the encoding of fully commutative elements by heaps, and derive from their classifications proved in~\cite{BJN} new expressions for $W^{FC}(q)$ (up to a polynomial). The latter involve generating functions for integer partitions and yield, through arithmetical investigations, the desired minimal periods.

More precisely, we will show how heaps associated with FC elements corresponding to the periodic part of $W^{FC}(q)$, which we call \emph{long fully commutative elements}, can be enumerated, according to the length, through families of integer partitions. In type $\Aaff$, these considerations exhibit a \emph{cyclic sieving phenomenom} (see for instance~\cite{reiner2004cyclic, sagan2010cyclic}). In all classical affine types, our periodicity results  can be summarized as follows.

\begin{theorem*}[Minimal Periods] Let $n\geq 2$.
In type $\Aaff_{n-1}$, the minimal ultimate period of the growth function $(W^{FC}_\ell)_{\ell\geq 0}$ is equal to $p^{\alpha-1}$ if $n=p^\alpha$ for a prime $p$ and a positive integer $\alpha$, and to $n$ otherwise. In type $\aff{C}_{n}$ (\emph{resp.}  $\aff{B}_{n+1}$, \emph{resp.} $\aff{D}_{n+2}$), the minimal period is given by $2m+1$ (\emph{resp.} $(2m+1)(2n+1)$, \emph{resp.} $n+1$) where  $2m+1$ is the largest odd divisor of $n+1$. 
\end{theorem*}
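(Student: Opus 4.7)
The periodicity result recalled above tells us that for each classical affine type,
\[
W^{FC}(q) \;=\; R(q) \;+\; \frac{P(q)}{1 - q^N},
\]
where $N$ is the period listed in the first table ($n$, $n+1$, $(n+1)(2n+1)$, $n+1$ respectively) and $R, P$ are polynomials. The minimal ultimate period of $(W^{FC}_\ell)_{\ell\ge 0}$ is then the smallest divisor $N' \mid N$ such that $P(q)/(1-q^N)$ can be rewritten as $P'(q)/(1-q^{N'})$. Expanding in partial fractions $P(q)/(1-q^N) = \sum_{j=0}^{N-1} c_j/(1-\zeta^{-j}q)$ with $\zeta = e^{2\pi i/N}$ yields the clean characterization
\[
\text{minimal period} \;=\; \operatorname{lcm}\{N/\gcd(j,N) : c_j \neq 0\},
\]
so the problem reduces to identifying which roots of unity $\zeta^j$ carry a nonzero residue.

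\textbf{Heap-to-partition dictionary.} Using the heap encoding and classification of long FC elements from \cite{BJN}, the periodic part of $W^{FC}(q)$ can be rewritten as a finite sum of $q$-series over families of integer partitions subject to type-dependent shape and parity constraints. Since partitions in a box are counted by Gaussian binomials, each piece is an explicit product of $\qbi{a}{b}{q}$'s (up to a $q$-monomial). The residues $c_j$ are then accessible via the $q$-Lucas congruence and the standard evaluations of Gaussian binomials at primitive roots of unity, which give a clean vanishing/nonvanishing criterion for each summand at each $\zeta^j$.

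\textbf{Type-by-type analysis.} In type $\aff{A}_{n-1}$ the heap enumeration is controlled, essentially, by Gaussian binomials $\qbi{n}{k}{q}$, and $q$-Lucas evaluation at a primitive $d$-th root of unity ($d \mid n$) shows that a nonzero residue survives only for $d$ dividing $n/p$ when $n = p^\alpha$ is a prime power (and for all $d \mid n$ otherwise). This yields minimal period $p^{\alpha-1}$ (resp. $n$). The natural rotation action of $\mathbb{Z}/n\mathbb{Z}$ on the cylindrical heaps then provides the cyclic action, and matching fixed-point counts with the root-of-unity evaluations in the spirit of Reiner--Stanton--White gives the cyclic sieving phenomenon. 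For types $\aff{C}_n$, $\aff{B}_{n+1}$, $\aff{D}_{n+2}$ the partitions carry parity constraints coming from the signed-permutation structure, which force residues at roots of unity of \emph{even} order dividing $n+1$ to vanish: only the odd divisors of $n+1$ survive, the largest being $2m+1$. In type $\aff{B}$, the factor $2n+1$ is coprime to $n+1$ and to all its divisors, so it contributes independently, yielding the combined period $(2m+1)(2n+1)$. In type $\aff{D}$ the analogous analysis shows no further collapse and $n+1$ is already minimal.

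\textbf{Main obstacle.} The heap-to-partition dictionary is largely available from \cite{BJN}, so the substantive work is arithmetic: proving \emph{non-vanishing} of $c_j$ at precisely the right $j$'s. Individual vanishing statements are usually a direct consequence of $q$-Lucas, but ruling out accidental cancellations across the whole sum of Gaussian binomials is the delicate step, especially in type $\aff{B}$ where one must decouple roots of unity of order dividing $n+1$ from those of order dividing $2n+1$, and in type $\aff{A}$ where the prime-power dichotomy forces one to carefully identify $d = p^{\alpha-1}$ as the ``last surviving'' modulus. The cyclic sieving assertion in type $\aff{A}$ then follows by packaging these evaluations as a Reiner--Stanton--White-type identity.
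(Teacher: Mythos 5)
Your overall scheme---write the periodic part over $1-q^N$, expand into partial fractions, and read off the minimal period as the least common multiple of the orders of the roots of unity carrying nonzero residues---is exactly the paper's framework (its Proposition~\ref{prop:partialfractionperiod} and Corollary~\ref{coro:partialfractionperiod}). But the proposal stops short of the two steps that actually carry the proof. First, the ``heap-to-partition dictionary'' is not ``largely available from \cite{BJN}'': what \cite{BJN} provides are expressions of the periodic parts in terms of the lattice-walk generating functions $\touch{\Cyl}_n(q)$ and $\touch{\Gen}_n(q)$, which have no known closed form (the paper stresses they are only computable recursively). The substantive new work is a combinatorial decomposition of a long alternating heap into two Ferrers diagrams (bottom and top) plus an explicitly sized bulk, which, via the Durfee-square identity $\sum_k\qbi{n}{k}{q}^2q^{k^2}=\qbi{2n}{n}{q}$ and the staircase identity $\sum_k\qbi{n}{k}{q}q^{k(k+1)/2}=(-q;q)_n$, yields the closed forms $\aff{A}_{n-1}^{FC}(q)\equiv\bigl(\qbi{2n}{n}{q}-2\bigr)/(1-q^n)$ and $\aff{C}_{n}^{FC}(q)\equiv(-q;q)_n^2/(1-q^{n+1})+2n/(1-q)$. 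Without these closed forms your residues are sums of root-of-unity evaluations, and, as you yourself concede under ``Main obstacle,'' you have no way to exclude cancellations; with them there is nothing to exclude, since each residue is a single explicit quantity, namely $\bi{2d}{d}-2$ with $d=(n,j)$ in type $\aff{A}$, and $\tfrac14\bigl(1-(-1)^{(n+1)/d}\bigr)^{2d}$ in type $\aff{C}$ (and the latter is a product evaluation, not a $q$-Lucas computation). So the non-vanishing step you flag as delicate is precisely the content of the theorem, and it is left unproven.

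Two further points are off. In type $\aff{A}$ your vanishing pattern is incorrect: the residue at a primitive $n$-th root of unity is $\bi{2}{1}-2=0$ for \emph{every} $n$, not only for prime powers; the surviving orders are exactly the proper divisors of $n$, so one still needs the (easy but necessary) arithmetic observation that their least common multiple is $p^{\alpha-1}$ when $n=p^\alpha$, and is $n$ otherwise by writing $n=n_1n_2$ with coprime $n_1,n_2<n$. In types $\aff{B}$ and $\aff{D}$ your argument is not anchored to anything concrete: the parity analysis of the alternating part gives period $2m+1$ in both cases, so ``no further collapse'' cannot by itself explain why $\aff{D}_{n+2}$ has period $n+1$ rather than $2m+1$. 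The paper gets this from the explicit decompositions \eqref{eq:Btilde} and \eqref{eq:Dtilde} of \cite{BJN}: type $\aff{B}_{n+1}$ contains an extra summand $q^{2(2n+1)}/(1-q^{2n+1})$, whose poles have order dividing $2n+1$ (coprime to $n+1$, hence no interference), and type $\aff{D}_{n+2}$ contains $2q^{3(n+1)}/(1-q^{n+1})$, whose residues are nonzero at all $(n+1)$-th roots of unity and cannot cancel the nonnegative residues coming from the alternating part. Identifying these extra terms, and checking non-cancellation between summands sharing poles, is absent from your proposal.
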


We also determine in the same way the minimal periods for the corresponding affine involutions. Moreover, we compute expressions for the number of FC elements of a given large enough length $\ell$, for each of these types.\\

This paper is organized as follows. In Section~\ref{sec:heapsfullycomm}, we recall definitions and properties concerning heaps and fully commutative elements. In Section~\ref{sec:prelim}, we prove useful elementary results  on ultimately periodic sequences, specializations of the $q$-binomial coefficients, and  recall some classical identities on integer partitions. Periodicity results regarding growth functions for long FC elements and involutions of type $\Aaff_{n-1}$ are given in Section~\ref{Aaffine}, while the other classical affine types are treated in Section~\ref{CBDaffine}. From these results, we compute some exact and asymptotic evaluations in Section~\ref{section:num}. Finally, a manifestation of the cyclic sieving phenomenon occurring in type $\Aaff_{n-1}$ is explained in Section~\ref{sec:csp}.

\section{Heaps and Fully commutative elements}
\label{sec:heapsfullycomm}
In this section, we recall the definition of heaps and its relation with fully commutative elements in Coxeter groups. We finish by recalling relevant results from~\cite{BJN} regarding fully commutative elements in (classical) affine types.
\medskip

{\noindent \bf Heaps.} Let $\Gamma$ be a finite, simple graph with vertex set $S$.  A {\em heap} on $\Gamma$ (or $\Gamma$-heap) is a finite poset $(H,\leq)$, together with a labeling map $\epsilon:H\to\Gamma$, which satisfies the following conditions:\par
(i) For any vertex $s$ (\emph{resp.}  any edge $\{s,t\}$), the subposet $H_s:=\epsilon^{-1}(\{s\})$ (\emph{resp.} $H_{\{s,t\}}:=\epsilon^{-1}(\{s,t\})$) is totally ordered;\par 
(ii) The partial ordering $\leq$ is the smallest one containing all chains $H_s$ and $H_{\{s,t\}}$.

We write $H_{s}=\{s^{(1)}<s^{(2)}<\cdots<s^{(k)}\}$ and its elements are called {\em $s$-elements}. Two heaps on $\Gamma$ are {\em isomorphic } if there exists a poset isomorphism between them which preserves the labels. The size $|H|$ of a heap $H$ is its cardinality. Heaps were originally defined by Viennot~\cite{ViennotHeaps}; the definition we use can be found as~\cite[p.20]{GreenBook} or \cite[Definition 2.2]{KrattHeaps}.

 As defined in~\cite{BJN}, a $\Gamma$-heap $H$ is {\em alternating} if for each edge $\{s,t\}$ of $\Gamma$, the chain $H_{\{s,t\}}$ has alternating labels $s$ and $t$.
\medskip

{\noindent \bf Words and Heaps.}  Consider now words on $S$, i.e. elements of the free monoid $S^*$ generated by $S$.  Let $\sim$ be the equivalence relation on $S^*$ generated by pairs $\mathbf{u}st\mathbf{v}\sim \mathbf{u}ts\mathbf{v}$ with letters $\{s,t\}\in S$ which are \emph{not} adjacent in $\Gamma$. A {\em $\Gamma$-commutation class} is an equivalence class for this relation.

 Now given a word $\mathbf{w}=s_{1}\cdots s_{l}$ in $S^*$,  set $i\prec j$ if $i<j$ and $\{s_{i},s_{j}\}$ is an edge of $\Gamma$, and extend by transitivity to a partial ordering $\prec$ of the index set $\{1,\ldots, l\}$. This poset together with $\epsilon:i\mapsto s_{a_i}$ forms a heap whose isomorphism class we denote by $\H({\mathbf{w}})$. We have then the following fundamental result.
\begin{proposition}[Viennot \cite{ViennotHeaps}]
\label{prop:wordtoheap} The map $\mathbf{w}\mapsto \H({\mathbf{w}})$ induces a bijection between $\Gamma$-commutation classes of words and finite $\Gamma$-heaps. 
\end{proposition}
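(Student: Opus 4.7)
The plan is to establish that the assignment $\mathbf{w}\mapsto\H(\mathbf{w})$ descends to the set of $\Gamma$-commutation classes and is a bijection onto finite $\Gamma$-heaps. I would split the argument into three steps: well-definedness on classes, surjectivity, and injectivity.

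First, for well-definedness, suppose $\mathbf{w}=\mathbf{u}st\mathbf{v}$ and $\mathbf{w}'=\mathbf{u}ts\mathbf{v}$ where $\{s,t\}$ is not an edge of $\Gamma$. The index sets of the two words are identified in the obvious way, and the swapped positions $i,i+1$ (with $i=|\mathbf{u}|+1$) satisfy $\{s_i,s_{i+1}\}=\{s,t\}$, so the defining relation $i\prec i+1$ never holds in either word. For every other pair of indices the covering relations coming from the definition of $\prec$ are identical in the two words. Hence the relabeling sending the positions of $\mathbf{w}$ to those of $\mathbf{w}'$ (identity except on $i,i+1$, which it swaps) is a label-preserving poset isomorphism between $\H(\mathbf{w})$ and $\H(\mathbf{w}')$. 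Iterating gives $\H(\mathbf{w})\cong\H(\mathbf{w}')$ whenever $\mathbf{w}\sim\mathbf{w}'$.

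Second, for surjectivity I would construct an inverse from linear extensions. Given a finite $\Gamma$-heap $(H,\leq,\epsilon)$, choose any linear extension $h_1<_L h_2<_L\cdots<_L h_l$ of $\leq$, and set $\mathbf{w}(H):=\epsilon(h_1)\epsilon(h_2)\cdots\epsilon(h_l)$. I then need to check that $\H(\mathbf{w}(H))\cong H$. The key point is that the partial order on the indices of $\mathbf{w}(H)$ induced by condition (ii) is generated by $i\prec j$ whenever $i<j$ and $\{\epsilon(h_i),\epsilon(h_j)\}\in E(\Gamma)$ or $\epsilon(h_i)=\epsilon(h_j)$; but by conditions (i) and (ii) defining a heap, the order $\leq$ on $H$ is also generated exactly by these relations restricted to comparable pairs in a linear extension. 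So the identity map on indices gives the required label-preserving isomorphism $\H(\mathbf{w}(H))\cong H$.

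Third, for injectivity on commutation classes, suppose $\H(\mathbf{w})\cong\H(\mathbf{w}')$ via a label-preserving isomorphism $\varphi$. Then $\mathbf{w}$ and $\mathbf{w}'$ can both be viewed as linear extensions of the same abstract heap $H$, after identifying their index sets via $\varphi$. I would invoke the standard fact that any two linear extensions of a finite poset are connected by a sequence of transpositions of pairs of consecutive, $\leq$-incomparable elements. Each such transposition in our setting swaps two adjacent letters $s,t$ of the current word that are not joined by any chain $H_u$ or $H_{\{u,v\}}$; by condition (ii) this forces $s\neq t$ and $\{s,t\}$ to be a non-edge of $\Gamma$, i.e.\ exactly the generating move of $\sim$. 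Hence $\mathbf{w}\sim\mathbf{w}'$, completing the proof.

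The main obstacle is the last step: carefully verifying that the "swap two consecutive incomparable elements" moves between linear extensions correspond precisely, under the labeling, to the allowed commutation moves on words. This is where condition (ii) that $\leq$ is the \emph{smallest} partial order containing the monochromatic and bichromatic chains is essential, since it is exactly what guarantees that two consecutively placed elements with labels forming a non-edge of $\Gamma$ are $\leq$-incomparable and can legally be swapped.
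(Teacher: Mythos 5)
The paper itself offers no argument for this statement: it is quoted as Viennot's theorem and used as a black box, so there is no internal proof to compare against. Your proposal is a correct, self-contained proof along the standard Viennot--Stembridge lines: well-definedness by checking that a commutation move permutes the generating relations of $\prec$; surjectivity by reading a linear extension of a heap and checking the two inclusions between the resulting order $\prec$ and the heap order $\leq$; injectivity by the classical fact that two linear extensions of a finite poset are linked by transpositions of consecutive incomparable elements, each of which is a legal commutation move. The structure is sound and each step can be completed as you indicate.

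Two small repairs. First, your attribution of conditions is reversed in the last step: that two \emph{incomparable} elements must carry distinct, non-adjacent labels follows from condition (i) (the fibers $H_s$ and $H_{\{s,t\}}$ are chains, so elements with equal or adjacent labels are comparable), not from condition (ii); condition (ii) is what you need in the surjectivity step, to know that $\leq$ is the transitive closure of the within-chain (dependent-label) relations and hence contained in $\prec$. Your closing remark that (ii) guarantees consecutive elements with non-edge labels are incomparable is true, but it is the converse of the implication your injectivity argument actually uses. Second, for $\H(\mathbf{w})$ to satisfy condition (i) at all (e.g.\ for a word such as $ss$), the generating relation must include $i\prec j$ when $i<j$ and $s_i=s_j$, not only when $\{s_i,s_j\}$ is an edge; you already use this convention implicitly in your surjectivity step (where you allow $\epsilon(h_i)=\epsilon(h_j)$), so you should adopt it uniformly, and with it your step 1 argument goes through verbatim since the swapped letters are distinct and non-adjacent.
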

\smallskip

{\noindent \bf Full commutativity.} We refer the reader to~\cite{Humphreys} for a standard introduction to Coxeter systems.
Consider integers $m_{st}$ indexed by $S^2$ satisfying $m_{ss}=1$ and, for $s\neq t$, $m_{st}=m_{ts}\in\{2,3,\ldots\}\cup\{\infty\}$. The {\em Coxeter group} $W$ associated with $M$ is defined by generators set $S$ and relations $(st)^{m_{st}}=1$ if $m_{st}<\infty$. These relations can be rewritten as $s^2=1$ for all $s$, and $\underbrace{sts\cdots}_{m_{st}}  = \underbrace{tst\cdots}_{m_{st}},$  when $m_{st}<\infty$.

The {\em Coxeter graph} $\Gamma$ is the graph with vertex set $S$ and, for each pair $\{s,t\}$ with $m_{st}\geq 3$, an edge between $s$ and $t$ labeled by $m_{st}$; when $m_{st}=3$, the edge is usually left unlabeled since this case occurs frequently. Notice that non adjacent vertices correspond to \emph{commutation relations} $st=ts$. For $w\in W$, the {\em length} of $w$, denoted by $\ell(w)$, is the minimum $l$ of any expression $w=s_1\cdots s_l$ with $s_i\in S$. Expressions of length $\ell(w)$ are called \emph{reduced} and form the set $\mathcal{R}(w)$.
\smallskip
 
Fix a Coxeter system $(W,S)$ and let $\Gamma$ be its associated Coxeter graph.

\begin{definition} An element $w\in W$ is \emph{fully commutative} (FC) if the set $\mathcal{R}(w)$ forms a $\Gamma$-commutation class.
\end{definition}

Therefore if $w$ is a FC element and $\mathbf{w}\in \mathcal{R}(w)$, one can define $\H(w):=\H(\mathbf{w})$ and heaps of this form are called {\em FC heaps}. We have thus a bijection between FC elements and FC heaps, but one needs an intrinsic characterization of FC heaps for this to be useful. 

This was done by Stembridge in~\cite{St1}, and used in~\cite{BJN} to classify the FC heaps for all affine types. Among these FC heaps, some belong to finite families, and others to infinite families. As we are interested in ultimate periodicity in the present article, we will focus on the latter.

In the rest of this section, we therefore  recall the relevant results from~\cite{BJN} concerning long FC elements in affine types $\Aaff_{n-1}$ and $\aff{C}_n$. This is enough since long FC elements in the other classical affine types $\aff{B}_{n+1}$ and $\aff{D}_{n+2}$ are deduced from the ones in $\aff{C}_n$.
\medskip

{\bf Type $\aff{A}$}: The Coxeter graph of type $\Aaff_{n-1}$ is as follows:
\begin{minipage}[c]{3cm}
\includegraphics[height=2cm]{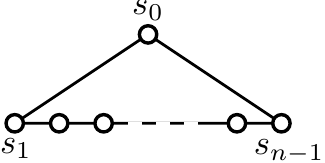}
\end{minipage}
\smallskip

FC heaps in type $\Aaff_{n-1}$ are precisely alternating heaps, as was proved in ~\cite{BJN,Gre321}. To represent them, we duplicate the set of $s_0$-elements and use one copy for the depiction of the chain $H_{\{s_0,s_{1}\}}$ and one copy for $H_{\{s_{n-1},s_0\}}$. This can be seen in Figure~\ref{fig:AtildeHeap_slanted}, left. The representation on the right is a linear deformation of the first one which can be regarded in a more visible way as ``drawn on a cylinder".

\begin{figure}[!ht]
\begin{center}
 \includegraphics[width=0.6\textwidth]{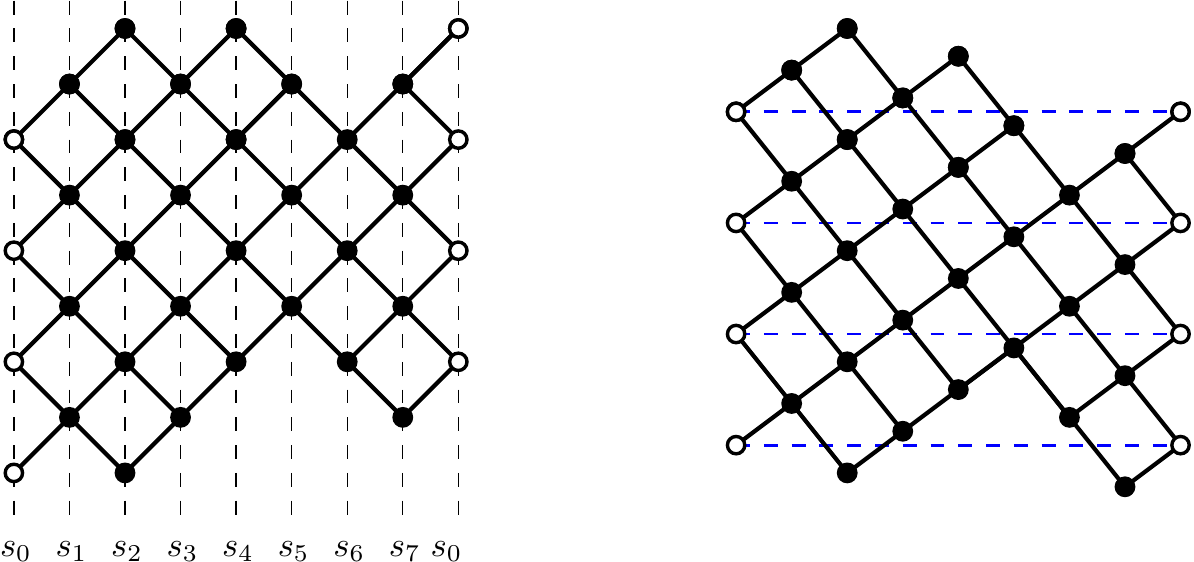}
 \caption{\label{fig:AtildeHeap_slanted} Representation of a FC heap of type $\aff{A}_7$.}
 \end{center} 
\end{figure}

 Let $\touch{\Cylset}_n$ be the set of lattice paths from $(0,i)$ to $(n,i)$ for a certain $i$, using steps $D=(1,-1),H_1=H_2=(1,0),U=(1,1)$, which stay above the $x$-axis but must touch it at some point; here $H_1,H_2$ correspond to two possible labelings for horizontal steps. The generating function $\touch{\Cyl}_n(q)$ counts such paths according to the algebraic area below them. Thanks to the work in~\cite{BJN}, the generating function for long FC elements of type $\Aaff_{n-1}$  can be expressed in terms of these walks:
\begin{equation}
 \label{eq:Aaffine}
\aff{A}_{n-1}^{FC}(q)=\frac{\touch{\Cyl}_n(q)-2}{1-q^{n}} +\mbox{a polynomial}.
\end{equation}

\medskip

{\bf Type $\aff{C}$}: the Coxeter graph of type $\aff{C}_n$ is:
\begin{minipage}[c]{3cm}
\includegraphics[height=0.8cm]{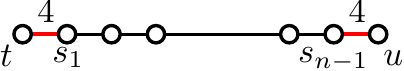}
\end{minipage}
\smallskip

It was shown in~\cite{BJN} that, apart from three finite families,  FC heaps of type $\aff{C}_n$ form two infinite families: alternating heaps and ``zig-zag'' heaps which correspond to  subwords of $(ts_1\cdots s_{n-1}us_{n-1}\cdots s_1)^\infty$ and whose generating function is easy to express. Consider  paths  from $(0,i)$ to $(n,j)$ for some nonnegative integers $i,j$, using steps $D,H_1,H_2,U$, which stay above the $x$-axis but must touch it at some point; and denote by $\touch{\Gen}_n(q)$ their generating function according to the sums of the heights of their points. Then, as shown in~\cite{BJN}, the generating function for long FC elements of type $\aff{C}_n$ can be expressed in terms of these walks:
\begin{equation}
 \label{eq:Caffine}
\aff{C}_{n}^{FC}(q)=\frac{\touch{\Gen}_n(q)}{1-q^{n+1}} + \frac{2n}{1-q}+\mbox{a polynomial}.
\end{equation}

 FC heaps of types $\aff{B}$ and $\aff{D}$ can be described based on those of types $\aff{C}$, cf. \cite{BJN}. On the level of generating functions (for long heaps), the relation is simple and will be recalled in Section~\ref{CBDaffine}.

\section{Preliminary results}
\label{sec:prelim}
In this section, we first give a general proposition on ultimately periodic sequences, and then state certain  specialization results about the $q$-binomial coefficients, which will both be useful later. Finally, we recall some classical identities on integer partitions.
\subsection{Ultimately periodic sequences}
Let $f(q)={\sum_{i\geq 0}a_iq^i}$ and $g(q)={\sum_{i\geq 0}b_iq^i}$ be two power series in $\mathbb{C}[[q]]$; we say that they are equivalent, and write $f\equiv g$, if $f-g$ is a polynomial. Equivalently, $f\equiv g$ iff the two coefficient sequences $(a_i)_{i\geq0}$ and $(b_i)_{i\geq0}$ coincide for $i$ large enough, i.e. if the set $\{i\geq0|a_i\neq b_i\}$ is finite. This is clearly an equivalence relation on power series.

Let $\mathbb{U}\subseteq \mathbb{C}$ be the group of complex roots of unity. We denote by  $\operatorname{order}(\xi)$  the multiplicative order of $\xi\in \mathbb{U}$, i.e. the smallest $m>0$ such that $\xi^m=1$.

\begin{proposition}\label{prop:partialfractionperiod} Let $\displaystyle{f(q)={\sum_{i\geq 0}a_iq^i}\in 
\mathbb{C}[[q]]}$. Then the following conditions are equivalent:
\begin{enumerate}
\item \label{it1} The sequence $(a_i)_{i\geq 0}$ is ultimately periodic.
\item \label{it2}$f(q)\equiv\frac{P(q)}{1-q^N}$ for some polynomial $P(q)$ and positive integer $N$.
\item \label{it3}$\displaystyle{f(q)\equiv\sum_{\xi\in\mathbb{U}}\frac{\alpha_{\xi}}{1-q\xi}}$, where the  $\alpha_\xi$ are complex coefficients such that $\mathbb{U}_f:=\{\xi\in\mathbb{U}; \alpha_\xi\neq 0\}$ is a finite set.
\end{enumerate}
\end{proposition}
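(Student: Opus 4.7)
The plan is to prove the chain \eqref{it1} $\Leftrightarrow$ \eqref{it2} $\Leftrightarrow$ \eqref{it3}, using in both equivalences that polynomial summands can be absorbed freely into the relation $\equiv$.

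For \eqref{it1} $\Rightarrow$ \eqref{it2}, I would fix $i_0$ and $N$ so that $a_{i+N}=a_i$ for $i \geq i_0$, and then observe that the tail $\sum_{i\geq i_0} a_i q^i$ equals $\frac{q^{i_0}(a_{i_0}+a_{i_0+1}q+\cdots+a_{i_0+N-1}q^{N-1})}{1-q^N}$ by summing a geometric series. Adding the (polynomial) head $\sum_{i<i_0} a_i q^i$ and using $\equiv$ yields \eqref{it2}. Conversely, if $f \equiv \frac{P(q)}{1-q^N}$, Euclidean division $P = Q(1-q^N) + R$ with $\deg R < N$ gives $\frac{P(q)}{1-q^N} = Q(q) + \frac{R(q)}{1-q^N}$, and expanding $\frac{R(q)}{1-q^N} = R(q)\sum_{k\geq 0} q^{kN}$ produces a series whose coefficients are periodic of period $N$ from some point on; combined with $f \equiv \frac{P(q)}{1-q^N}$, this proves \eqref{it1}.

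For \eqref{it2} $\Rightarrow$ \eqref{it3}, I would again reduce to the case $\deg R < N$ as above and use partial fractions over $\mathbb{C}$: since $1-q^N = \prod_{\xi^N=1}(1-q\xi)$ has simple roots at $1/\xi$ for $\xi$ ranging over $N$-th roots of unity, one gets
\[
\frac{R(q)}{1-q^N} = \sum_{\xi^N=1} \frac{\alpha_\xi}{1-q\xi},
\]
with finitely many $\alpha_\xi$ nonzero, and the polynomial $Q(q)$ disappears modulo $\equiv$. For the converse \eqref{it3} $\Rightarrow$ \eqref{it2}, let $N$ be the least common multiple of $\operatorname{order}(\xi)$ over $\xi \in \mathbb{U}_f$ (a finite set by assumption), so $\xi^N=1$ for every $\xi$ appearing. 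Then for each such $\xi$,
\[
\frac{\alpha_\xi}{1-q\xi} = \frac{\alpha_\xi \cdot (1 + q\xi + q^2\xi^2 + \cdots + q^{N-1}\xi^{N-1})}{1-q^N},
\]
and summing these finitely many terms produces a rational function of the form $\frac{P(q)}{1-q^N}$, establishing \eqref{it2}.

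The argument is essentially bookkeeping, and I do not anticipate a serious obstacle; the one spot that requires mild care is handling the equivalence $\equiv$ consistently, in particular noting that in the implication \eqref{it2} $\Rightarrow$ \eqref{it3} the quotient $Q(q)$ from Euclidean division must be dropped, and that the partial fraction decomposition is performed over $\mathbb{C}$ so the coefficients $\alpha_\xi$ are a priori complex (which is fine, as the statement allows). It will also be useful, for later applications, to record from the proof that in \eqref{it2} one may take any $N$ that is a common multiple of the $\operatorname{order}(\xi)$ for $\xi \in \mathbb{U}_f$, and conversely that the set $\mathbb{U}_f$ appearing in \eqref{it3} is intrinsic to $f$ and contained in the group of $N$-th roots of unity for any valid $N$ in \eqref{it2}.
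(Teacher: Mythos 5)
Your proposal is correct and follows essentially the same route as the paper: geometric-series summation for \eqref{it1}$\Rightarrow$\eqref{it2}, Euclidean division for the converse, partial fractions over $\mathbb{C}$ for \eqref{it2}$\Rightarrow$\eqref{it3}, and a common multiple of the orders of the roots of unity in $\mathbb{U}_f$ for \eqref{it3}$\Rightarrow$\eqref{it2}. The only difference is that you spell out the partial-fraction step and the closing remarks (which the paper defers to Corollary~\ref{coro:partialfractionperiod} and equation~\eqref{eq:parfrac}) in slightly more detail.
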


\begin{proof}
\eqref{it1}$\Rightarrow$\eqref{it2}: By hypothesis there exist $d,N>0$ such that $a_{i+N}=a_i$ for $i\geq d$,
\begin{align*}
\text{hence } f(q)&=
\sum_{i=0}^{d-1}a_iq^i+\sum_{k\geq 0}\sum_{i=d}^{d+N-1}a_iq^{kN+i}
=\sum_{i=0}^{d-1}a_iq^i+\frac{1}{1-q^N}\left(\sum_{i=d}^{d+N-1}a_iq^{i}\right),
\end{align*}
so that $f(q)$ satisfies Condition \eqref{it2}.

\eqref{it2}$\Rightarrow$\eqref{it1}: write the euclidean division $P(q)=(1-q^N)Q(q)+R(q)$ where $\deg R<N$; one has then $f(q)=Q(q)+R(q)/(1-q^N)\equiv R(q)/(1-q^N)$ which shows that $(a_i)_{i\geq 0}$ is ultimately periodic.

\eqref{it2}$\Rightarrow$\eqref{it3}: this follows by partial fraction decomposition of $f(q)$.

\eqref{it3}$\Rightarrow$\eqref{it2}: since $\mathbb{U}_f$ is finite, there exists a positive integer $ N$ such that $\operatorname{order}(\xi)$ divides $N$ for all $\xi\in\mathbb{U}_f$. This implies that $(1-q\xi)$ divides $(1-q^N)$ in $\mathbb{C}[q]$ for all such $\xi$, and thus $f(q)$ can be written as $P(q)/(1-q^N)$.\medskip
\end{proof}

\begin{corollary}\label{coro:partialfractionperiod}
If one of the equivalent conditions of Proposition~\ref{prop:partialfractionperiod} holds,  then the minimal period in~\eqref{it1} is equal to the smallest $N$ for which~\eqref{it2} holds, and is also equal to the least common multiple of all the integers $\operatorname{order}(\xi)$, $\xi\in\mathbb{U}_f$, from condition~\eqref{it3}.
\end{corollary}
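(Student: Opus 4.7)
The plan is to introduce three quantities: $N_1$ for the minimal ultimate period appearing in~\eqref{it1}, $N_2$ for the smallest positive integer $N$ realising~\eqref{it2}, and $N_3:=\operatorname{lcm}\{\operatorname{order}(\xi)\,:\,\xi\in\mathbb{U}_f\}$ from~\eqref{it3}. I would then prove the two equalities $N_1=N_2$ and $N_2=N_3$ separately, each time by reusing the constructions already carried out in the proof of Proposition~\ref{prop:partialfractionperiod}.

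For $N_1=N_2$, feeding $N=N_1$ into the formula from the proof of \eqref{it1}$\Rightarrow$\eqref{it2} produces a valid representation of the form~\eqref{it2}, so $N_2\leq N_1$. Conversely, if $f(q)\equiv P(q)/(1-q^N)$ for some $N$, the Euclidean division step in the proof of \eqref{it2}$\Rightarrow$\eqref{it1} produces an equivalent series $R(q)/(1-q^N)$ with $\deg R<N$ whose coefficient sequence is periodic of period dividing $N$ starting from its first term; hence $N_1\mid N$, and in particular $N_1\mid N_2$. Together these force $N_1=N_2$.

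For $N_2=N_3$, the proof of \eqref{it3}$\Rightarrow$\eqref{it2} already shows that $N=N_3$ realises~\eqref{it2}, so $N_2\leq N_3$. In the other direction, starting from $f(q)\equiv P(q)/(1-q^{N_2})$, I would apply partial fraction decomposition over $\mathbb{C}$ to obtain a representation of shape~\eqref{it3} whose pole set is contained in $\{\xi\in\mathbb{U}\,:\,\xi^{N_2}=1\}$. To transfer this inclusion to $\mathbb{U}_f$ itself, I need that $\mathbb{U}_f$ and the coefficients $\alpha_\xi$ are intrinsic to $f$: this follows from the fact that the rational functions $1/(1-q\xi)$ for distinct $\xi\in\mathbb{U}$ are linearly independent modulo $\mathbb{C}[q]$ (any finite combination $\sum\alpha_\xi/(1-q\xi)$ equivalent to $0$ must be the zero combination, as one sees by computing its residue at each potential pole $1/\xi$). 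This gives $\mathbb{U}_f\subseteq\{\xi\,:\,\xi^{N_2}=1\}$, hence every $\operatorname{order}(\xi)$ with $\xi\in\mathbb{U}_f$ divides $N_2$, so $N_3\mid N_2$, finishing the argument.

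The only nontrivial point — and therefore the main thing to pin down carefully — is the uniqueness statement asserting that $\mathbb{U}_f$ and the coefficients $\alpha_\xi$ really depend only on $f$. Once that residue-based uniqueness is in hand, the rest is elementary bookkeeping around the calculations already present in the proof of Proposition~\ref{prop:partialfractionperiod}, and the chain of inequalities and divisibilities closes up to give $N_1=N_2=N_3$.
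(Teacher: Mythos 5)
Your proof is correct and follows essentially the same route as the paper: both reuse the constructions from the proof of Proposition~\ref{prop:partialfractionperiod} and reduce everything to the fact that the pole set of any rational representative of $f$ modulo polynomials is exactly the set of inverses of elements of $\mathbb{U}_f$, so that every order $\operatorname{order}(\xi)$, $\xi\in\mathbb{U}_f$, divides any valid $N$ in~\eqref{it2}. The only difference is presentational: the paper runs the lcm comparison as a short proof by contradiction and asserts in one sentence that the poles form precisely $\mathbb{U}_f$, whereas you justify that uniqueness explicitly via a residue argument --- a welcome extra detail rather than a genuinely different approach.
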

\begin{proof}
It is clear from the previous proof that the minimal period of $(a_i)_{i\geq 0}$ is equal to the smallest possible $N$ in~\eqref{it2}. It is also clear from the proof of \eqref{it3}$\Rightarrow$\eqref{it2} that the least common multiple $M$ of the numbers $\operatorname{order}(\xi)$ for $\xi\in\mathbb{U}_f$ is a valid $N$ for \eqref{it2}. Now assume for the sake of contradiction that there exists $N<M$ such that \eqref{it2} holds. Then all the poles of $P(q)/(1-q^N)$ are roots of unity with orders dividing $N$, so their least common multiple is at most $N$. But these poles form precisely the set $\mathbb{U}_f$, which is absurd.
\end{proof}

We can be more explicit about the partial fraction decomposition when $f(q)$ has the form given in \eqref{it2} above. For a positive integer $N$, set $\xi_N:=\mbox{e}^{2i\pi/N}$; then one has
\begin{equation}
\label{eq:parfrac}
\frac{P(q)}{1-q^N}\equiv \frac{1}{N}\sum_{j=0}^{n-1}\frac{P(\xi_N^{-j})}{1-q\xi_N^j}.
\end{equation}

Indeed, in the decomposition $\frac{P(q)}{1-q^N}\equiv\sum_{j=0}^{N-1}\frac{\alpha_j}{1-q\xi_N^j}$, the coefficient $\alpha_j$ is equal to
\[
\lim_{q\to \xi_N^{-j}}\frac{P(q)(1-q\xi_N^j)}{1-q^N}=P(\xi_N^{-j})\lim_{q\to \xi_N^{-j}}\frac{1-q\xi_N^j}{1-q^N}=P(\xi_N^{-j})\lim_{q\to \xi_N^{-j}}\frac{-\xi_N^j}{-Nq^{N-1}}=\frac{P(\xi_N^{-j})}{N},
\]
where we used L'H\^opital's rule in the second equality.

\subsection{$q$-binomial coefficients}
Recall that the \emph{$q$-binomial coefficients} are defined as follows
$$\qbi{n}{k}{q}:=\frac{(q;q)_n}{(q;q)_k(q;q)_{n-k}},$$
where for any complex number $a$, $(a;q)_n:=(1-a)\cdots(1-aq^{n-1})$ is the $q$-shifted factorial. These deformations of the binomials are {\em polynomials } in the variable $q$, with positive integral coefficients; as we will see in Section~\ref{sub:partitions}, these polynomials enumerate certain integer partitions. It thus makes sense to substitute any complex number for $q$, and the following specialization will be used in the sequel. 
\begin{lemma}\label{lemma:qbinrootunity}
For any nonnegative integers $n$, $k$ and $j$ satisfying $0\leq k\leq n$, we have
$$\qbi{n}{k}{\xi_n^{j}}=\left\{
    \begin{array}{cl}
        \displaystyle\bi{(n,j)}{k(n,j)/n} & \mbox{if } n\,\mbox{ divides }\, k(n,j), \\\\
        0 & \mbox{otherwise,}
    \end{array}\right.
$$ 
where $(n,j)$ denotes the greatest common divisor of $n$ and $j$.
\end{lemma}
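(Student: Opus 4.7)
The plan is to specialize the product formula
\begin{equation*}
\qbi{n}{k}{q} = \prod_{i=1}^{k}\frac{1 - q^{n-i+1}}{1 - q^i}
\end{equation*}
at $q = \omega := \xi_n^j$. Writing $d := (n,j)$ and $a := n/d$, the root $\omega$ has multiplicative order $a$, so $1-q^m$ vanishes at $q=\omega$ precisely when $a \mid m$. Since $a \mid n$, the numerator factors vanish for $i \equiv 1 \pmod a$ and the denominator factors for $i \equiv 0 \pmod a$. A direct count of such indices in $\{1,\dots,k\}$ shows that when $a \nmid k$ the numerator carries exactly one more zero than the denominator, so $\qbi{n}{k}{\omega}=0$; this matches the second case, since $a \nmid k$ is equivalent to $n \nmid k(n,j)$.

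When $a \mid k$, I will write $k = al$ with $0 \le l \le d$. Both numerator and denominator then carry exactly $l$ vanishing factors, located at $i = 1, a+1, \dots, (l-1)a+1$ and at $i = a, 2a, \dots, la$ respectively. To pin down the resulting indeterminate ratio I will Taylor-expand each such factor around $q = \omega$ using $1 - q^m = -m\omega^{m-1}(q - \omega) + O\bigl((q-\omega)^2\bigr)$. Exploiting $\omega^{as}=1$, multiplying the $l$ vanishing numerator factors yields
\begin{equation*}
(-a)^l\omega^{-l}\,\frac{d!}{(d-l)!}\,(q-\omega)^l + O\bigl((q-\omega)^{l+1}\bigr),
\end{equation*}
while the $l$ vanishing denominator factors combine to $(-a)^l \omega^{-l}\, l!\,(q-\omega)^l$ plus higher order terms. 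Their ratio tends to $d!/\bigl(l!(d-l)!\bigr) = \binom{d}{l}$.

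It remains to verify that the \emph{non}-vanishing factors together contribute a factor of $1$. Evaluated at $q=\omega$, the non-vanishing numerator factors read $1 - \omega^{1-i}$ for $i \in \{1,\dots,al\}$ with $i \not\equiv 1 \pmod a$, while the non-vanishing denominator factors read $1 - \omega^i$ for $i \not\equiv 0 \pmod a$. In both families, the exponents of $\omega$ reduced modulo $a$ cover each nonzero residue in $\{1,\dots,a-1\}$ exactly $l$ times, so both products equal $\prod_{r=1}^{a-1}(1-\omega^r)^l$. Combining with the previous paragraph yields $\qbi{n}{k}{\omega} = \binom{d}{l} = \binom{(n,j)}{k(n,j)/n}$. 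The main obstacle is the bookkeeping in the Taylor expansion step: tracking the phases $\omega^{m-1}$ and the signs so that they cancel cleanly between numerator and denominator and leave precisely the binomial coefficient, with no spurious root of unity left over.
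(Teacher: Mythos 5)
Your proposal is correct, and it takes a genuinely different route from the paper's. You specialize the product formula $\qbi{n}{k}{q}=\prod_{i=1}^{k}(1-q^{n-i+1})/(1-q^{i})$ at $\omega=\xi_n^j$ and do a first-order (L'H\^opital-type) analysis of the vanishing factors: the zero count settles the vanishing case (and $a\nmid k$ is indeed equivalent to $n\nmid k(n,j)$, since $n\mid kd\Leftrightarrow a\mid k$ with $d=(n,j)$, $a=n/d$), while for $k=al$ the vanishing factors contribute $(-a)^l\omega^{-l}\,d!/(d-l)!\,(q-\omega)^l$ against $(-a)^l\omega^{-l}\,l!\,(q-\omega)^l$, and the nonvanishing factors give $\bigl(\prod_{r=1}^{a-1}(1-\omega^r)\bigr)^l$ in both numerator and denominator because $\{1,\dots,al\}$ consists of $l$ full residue blocks modulo $a$. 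So the signs and phases you flag as the main obstacle do cancel pairwise, and the bookkeeping you sketch closes as written; the only point worth making explicit is that, $\qbi{n}{k}{q}$ being a polynomial, its value at $\omega$ equals the limit of the rational expression as $q\to\omega$. The paper argues differently: it uses the $q$-binomial theorem $\prod_{i=0}^{n-1}(y-q^i)=\sum_{k}\qbi{n}{k}{q}q^{\bi{k}{2}}(-1)^k y^{n-k}$, notes that at $q=\xi_n^j$ the left side collapses to $(y^{n/d}-1)^d$, and compares coefficients of $y^{n-k}$; this yields the binomial up to a factor $(-1)^{k-kd/n}\xi_n^{-j\bi{k}{2}}$, which is shown to equal $1$ by a $2$-adic valuation parity argument. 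Your approach is more elementary and avoids that parity check entirely, at the cost of tracking which factors vanish; the paper's is shorter once the $q$-binomial theorem is granted, but pushes the delicacy into the sign analysis. Both are in essence the classical ``$q$-Lucas at roots of unity'' evaluation.
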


This can be proved as a consequence of the so-called $q$-Lucas property, which has in this case a combinatorial proof (see for instance Sagan~\cite{sagancongruence}). We will give another proof, based on Stanley's~\cite[Exercise 3.45(b)]{StanleyEC1}.

\begin{proof}
 The $q$-binomial formula (see for instance \cite{GR})  can be written as:
\begin{equation}\label{eq:qbinom}
\prod_{i=0}^{n-1}(y-q^i)=\sum_{k=0}^{n}\qbi{n}{k}{q}q^{\bi{k}{2}}(-1)^ky^{n-k}.
\end{equation}
Next, setting $d:=(n,j)$ and $f:=n/d$, noticing that $\xi_n^{ij}=\xi_{n/d}^{ij/d}$ and $(n/d,j/d)=1$, we have:
$$\prod_{i=0}^{n-1}(y-\xi_n^{ij})=\prod_{t=0}^{d-1}\prod_{i=tf}^{(t+1)f-1}\left(y-\xi_{f}^{ij/d}\right)=\prod_{t=0}^{d-1}(y^{f}-1)=(y^{n/d}-1)^{d}.$$
By expanding this expression and identifying the coefficient of $y^{n-k}$ with the one in~\eqref{eq:qbinom} (where $q$ is replaced by $\xi_n^{j}$), we see that that this coefficient is $0$ unless $n$ divides $kd$. In this case, we have the identity
$$\qbi{n}{k}{\xi_n^{j}}=\bi{d}{kd/n}(-1)^{k-kd/n}\xi_n^{-j\bi{k}{2}}=(-1)^{k-kd/n}(-1)^{(k-1)kj/n}\bi{d}{kd/n}.$$
It remains to show that the exponents $k-kd/n$ and $(k-1)kj/n$ have the same parity. To see this, denote first for any integer $t$ its $2$-adic valuation by $v_2(t)$, and remark that, as $n$ divides $kd$, the number $kj/n$ is an integer. 

Now $(k-1)kj/n$ is odd if and only if both $k-1$ and $kj/n$ are odd, which is equivalent to the conditions  $v_2(k)>0$ and $v_2(k)+v_2(j)=v_2(n)$. In the same way, the integer $k-kd/n$ is odd if and only if exactly one of $k$ and $kd/n$ either $v_2(k)>0$ and $v_2(k)+v_2(d)=v_2(n)$, or $v_2(k)=0$ and $v_2(k)+v_2(d)>v_2(n)$. But, as $d$ divides $n$, this second condition is impossible. Finally, recalling that $v_2(d)=min(v_2(n), v_2(j))$, the conditions $v_2(k)>0$ and $v_2(k)+v_2(d)=v_2(n)$ are equivalent to $v_2(k)>0$ and $v_2(k)+v_2(j)=v_2(n)$.
\end{proof}

\subsection{Integer partitions}
\label{sub:partitions}
Recall that a \emph{partition} $\lambda:=(\lambda_1\geq\lambda_2\geq\cdots)$ of a nonnegative integer $n$ is a finite nonincreasing sequence of positive integers whose sum is equal to $n$, and $n=:|\lambda|$ is the \emph{size} of $\lambda$. Each of the  $\lambda_i$'s is called a {\em part} of the partition $\lambda$. A partition can be represented as a {\em Ferrers diagram}: it is a left-aligned array of boxes, such that each part $\lambda_i$ corresponds to a row of $\lambda_i$ boxes; see Figure~\ref{fig:partitions}, left, for the Ferrers diagram representing the partition $(14,10,5,5,3,2,2)$ of size $41$.

 The following is a well-known fact about $q$-binomial coefficients, and can be found for instance in~\cite{Andrews}. 
\begin{lemma}\label{lemma:partitionsboite}
For any positive integers $n$ and $k$, the generating function, according to the size, of partitions with $\lambda_1\leq n-k$ and at most $k$ parts is given by $\qbi{n}{k}{q}$.
\end{lemma}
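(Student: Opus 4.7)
The plan is to give the standard recursive proof. Let $p(a,b;q)$ denote the generating function, by size, of partitions whose Ferrers diagram fits inside an $a\times b$ rectangle (at most $a$ parts, each part at most $b$). The lemma is then the statement
\[
p(k,n-k;q)=\qbi{n}{k}{q}.
\]

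First I would establish a simple recurrence for $p(a,b;q)$ by classifying a partition $\lambda$ fitting in the $a\times b$ box according to whether its largest part $\lambda_1$ is strictly less than $b$ or equal to $b$. In the first case $\lambda$ fits in an $a\times(b-1)$ box, contributing $p(a,b-1;q)$. In the second case, removing the first row (which has weight $q^b$) gives a partition in an $(a-1)\times b$ box, contributing $q^b\,p(a-1,b;q)$. This yields
\[
p(a,b;q)=p(a,b-1;q)+q^{b}p(a-1,b;q),
\]
together with the initial conditions $p(0,b;q)=p(a,0;q)=1$.

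Next I would check that the $q$-binomials satisfy the corresponding recurrence. Translating $(a,b)=(k,n-k)$, one must verify
\[
\qbi{n}{k}{q}=\qbi{n-1}{k}{q}+q^{n-k}\qbi{n-1}{k-1}{q},
\]
which is an immediate algebraic consequence of the definition $\qbi{n}{k}{q}=(q;q)_n/\bigl((q;q)_k(q;q)_{n-k}\bigr)$: factor $\qbi{n-1}{k}{q}$ out of the right-hand side and simplify using $(q;q)_n=(q;q)_{n-1}(1-q^n)$ together with $(1-q^{n-k})+q^{n-k}(1-q^k)=1-q^n$. The boundary values also match since $\qbi{n}{0}{q}=\qbi{n}{n}{q}=1$.

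A simultaneous induction on $n$ (or on $a+b$) then forces $p(k,n-k;q)=\qbi{n}{k}{q}$ for all valid $n,k$. There is no real obstacle here; the only thing that requires a bit of care is getting the recurrence in the matching form (first row vs.\ last column), since splitting on $\lambda_k=0$ versus $\lambda_k\ge 1$ would produce the other $q$-Pascal recurrence $\qbi{n}{k}{q}=q^{k}\qbi{n-1}{k}{q}+\qbi{n-1}{k-1}{q}$ and one would have to either use that identity instead or pass through the conjugate partition. I would simply pick the split on $\lambda_1$ so that the algebraic verification is the cleanest.
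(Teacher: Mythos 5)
Your proof is correct: the split on $\lambda_1<n-k$ versus $\lambda_1=n-k$ gives exactly the recurrence $p(a,b;q)=p(a,b-1;q)+q^{b}p(a-1,b;q)$, your algebraic verification of the matching $q$-Pascal identity $\qbi{n}{k}{q}=\qbi{n-1}{k}{q}+q^{n-k}\qbi{n-1}{k-1}{q}$ is sound, and the induction with the stated boundary values closes the argument. The paper itself offers no proof of this lemma --- it is quoted as a well-known fact with a reference to Andrews --- and your recursive argument is the standard textbook proof of that fact, so there is nothing in the paper's treatment that your approach conflicts with or omits.
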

Such partitions correspond bijectively to Ferrers diagrams which fit in a rectangle with dimensions $k\times(n-k)$. We now record the two well-known identities
\begin{equation}\label{durfee}
\sum_{k=0}^{\min(a,b)}\qbi{a}{k}{q}\qbi{b}{k}{q}q^{k^2}=\qbi{a+b}{a}{q},
\end{equation}
and
\begin{equation}\label{staircase}
\sum_{k=0}^{n}\qbi{n}{k}{q}q^{k(k+1)/2}=(-q;q)_n.
\end{equation}

We give combinatorial proofs of these; graphical illustrations are provided in Figure~\ref{fig:partitions}. The r.h.s. of~\eqref{durfee} counts Ferrers diagrams in a $a\times b$ rectangle. For such a diagram, let $k$ be the size of the Durfee square, i.e. $\lambda_k\geq k$ and is maximal with this property. Removing the square (which has size $k^2$) leaves two diagrams wich fit respectively in rectangles $(a-k)\times k$ and $k\times (b-k)$, which proves~\eqref{durfee}.

 For~\eqref{staircase}, notice the r.h.s. counts partitions with distinct parts such that $\lambda_1\leq n$. For such a partition $\lambda$, let $k$ be its number of parts and remove $k,k-1, \ldots, 1$ from the parts $\lambda_1,\lambda_2,\ldots,\lambda_k$ respectively. Discarding possible zero parts, this leaves a partition $\lambda'$ whose Ferrers diagram fits in a $k\times (n-k)$-box, which proves~\eqref{staircase}.

\begin{figure}[!ht]
\begin{center}
 \includegraphics[width=\textwidth]{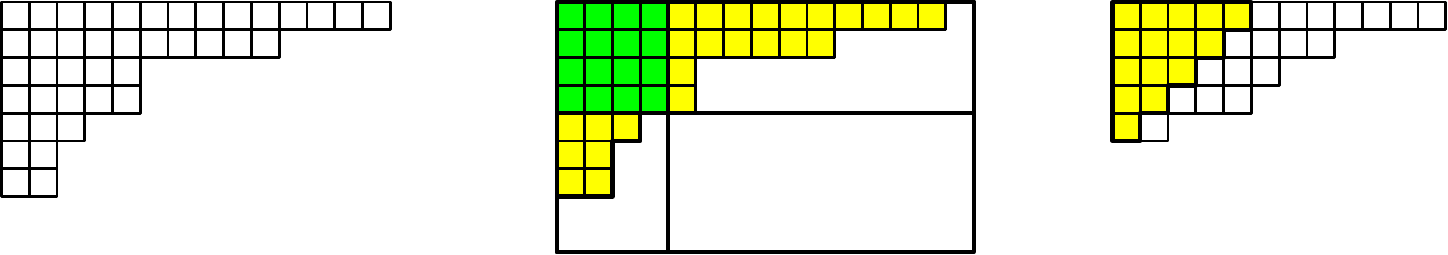}
 \caption{\label{fig:partitions} A Ferrers diagrams, and graphical illustrations of identities~\eqref{durfee} and~\eqref{staircase}.}
 \end{center} 
\end{figure}

\section[Type A affine]{Minimal period for the growth function in type $\Aaff_{n-1}$}
\label{Aaffine}

Let $a_\ell^{(n)}$ denote the number of FC elements of length $\ell$ in type $\Aaff_{n-1}$. The generating function $ \Aaff_{n-1}^{FC}(q)=\sum_{l\geq0} a_l^{(n)}q^l$ was first computed by Hanusa and Jones in~\cite{HanJon}. Up to a polynomial it can be written as:
 \begin{equation}
 \label{eq:HJ}
\aff{A}_{n-1}^{FC}(q)\equiv\frac{1}{1-q^{n}}\sum_{k=1}^{n-1}\qbi{n}{k}{q}^2.
\end{equation}
 
From Proposition~\ref{prop:partialfractionperiod} it follows that $(a_l^{(n)})_{l\geq0}$ is ultimately periodic (using either~\eqref{eq:Aaffine} or~\eqref{eq:HJ}). However, it seems not easy to deduce~\eqref{eq:Aaffine} an expression of the minimal period. This can be done through a third expression for $\aff{A}_{n-1}^{FC}(q)$ that we prove combinatorially now.

\begin{proposition}\label{prop:aaffine}
For any positive $n$, the generating function $\aff{A}_{n-1}^{FC}(q)$ satisfies
\begin{equation}
 \label{eq:newAaffine}
\aff{A}_{n-1}^{FC}(q)\equiv \frac{1}{1-q^{n}}\left(\qbi{2n}{n}{q}-2\right).
\end{equation}
\end{proposition}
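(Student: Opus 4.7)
My plan is to derive \eqref{eq:newAaffine} from the Hanusa--Jones expression \eqref{eq:HJ} together with the Durfee identity \eqref{durfee}, then absorb the resulting $q^{k^2}$ weights into the polynomial equivalence via a cyclotomic factor argument. More precisely, by \eqref{eq:HJ} it suffices to establish that $\sum_{k=1}^{n-1}\qbi{n}{k}{q}^2$ and $\qbi{2n}{n}{q} - 2$ differ by a polynomial multiple of $1-q^n$; dividing through by $1-q^n$ then gives the claim modulo $\equiv$.

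The Durfee identity \eqref{durfee} specialized to $a=b=n$ yields
\[
\qbi{2n}{n}{q} \;=\; \sum_{k=0}^{n}\qbi{n}{k}{q}^2 q^{k^2} \;=\; 2 \,+\, \sum_{k=1}^{n-1}\qbi{n}{k}{q}^2 q^{k^2},
\]
the ``$+2$'' collecting the $k=0$ and $k=n$ boundary contributions. This is the combinatorial heart of the argument, mirroring the decomposition of a partition in an $n \times n$ box by the size $k$ of its Durfee square, with residual parts fitting in two rectangles of shape $k\times(n-k)$.

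It remains to show that for each $1\leq k\leq n-1$ the polynomial $\qbi{n}{k}{q}^2\,(q^{k^2}-1)$ is divisible by $1-q^n$ in $\mathbb{C}[q]$. Using the factorization $1-q^n = \prod_{d\mid n}\Phi_d(q)$ into cyclotomic polynomials, one checks each $d\mid n$ separately: if $d\mid k$ then $d\mid k^2$ and hence $\Phi_d\mid(q^{k^2}-1)$; otherwise $d\nmid k$ and the standard formula $v_d(\qbi{n}{k}{q}) = \lfloor n/d\rfloor - \lfloor k/d\rfloor - \lfloor(n-k)/d\rfloor$ evaluates to $1$, so $\Phi_d\mid\qbi{n}{k}{q}$. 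Summing over $k$, one concludes that $\sum_{k=1}^{n-1}\qbi{n}{k}{q}^2(q^{k^2}-1)$ is a polynomial multiple of $1-q^n$, which together with the preceding step yields \eqref{eq:newAaffine}.

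The main obstacle I expect is this cyclotomic check, which is elementary but a little fiddly. A more uniformly combinatorial alternative would start from the path description \eqref{eq:Aaffine} and build an explicit bijection between cylindrical paths in $\touch{\Cylset}_n$ --- minus the two ``constant'' paths at height $0$ that account for the $-2$ --- and Ferrers diagrams in an $n\times n$ box, matching area below the path with size of the partition; this would give $\touch{\Cyl}_n(q) \equiv \qbi{2n}{n}{q}$ and thus an alternative proof. I would, however, expect such a bijection to be delicate to describe, so I favor the Durfee plus cyclotomic route above.
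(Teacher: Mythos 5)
Your argument is correct in substance, but it takes a genuinely different route from the paper. The paper proves \eqref{eq:newAaffine} without invoking the Hanusa--Jones formula at all: it decomposes the heap of a long FC element into two Ferrers diagrams $H_{low}$ and $H_{high}$ inside a $k\times(n-k)$ box plus a middle band of size $(m-k)n+k^2$, which gives directly $\aff{A}_{n-1}^{FC}(q)\equiv\frac{1}{1-q^{n}}\sum_{k=1}^{n-1}\qbi{n}{k}{q}^2q^{k^2}$, and then applies \eqref{durfee}. You instead take \eqref{eq:HJ} as an external input and reconcile the two numerators modulo $1-q^{n}$ via \eqref{durfee} together with a cyclotomic check; this is legitimate (it is essentially the content of Remark~\ref{rk:hanusajones}, where the paper performs the same reconciliation by evaluating at $n$th roots of unity via Lemma~\ref{lemma:qbinrootunity}, which is equivalent to your $\Phi_d$ argument), and your divisibility step is sound: for $d\mid n$ with $d\nmid k$ the multiplicity of $\Phi_d$ in $\qbi{n}{k}{q}$ is $\lfloor n/d\rfloor-\lfloor k/d\rfloor-\lfloor(n-k)/d\rfloor=1$, while for $d\mid k$ one has $\Phi_d\mid q^{k^2}-1$. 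One small slip: the $a=b=n$ case of \eqref{durfee} produces boundary terms $1+q^{n^2}$, not $2$, so your displayed identity is not an exact equality; since $n\mid n^2$ one has $q^{n^2}\equiv 1$ modulo $1-q^{n}$, so the conclusion is unaffected, but this one-line observation should be added. The trade-off between the two approaches: yours is shorter granted the cited result \eqref{eq:HJ}, whereas the paper's heap decomposition is self-contained and is exactly the technique reused for type $\aff{C}_{n}$ (Proposition~\ref{prop:caffine}), where no analogue of \eqref{eq:HJ} is available; the bijection $\touch{\Cyl}_n(q)\equiv\qbi{2n}{n}{q}$ you sketch as an alternative is not needed for either argument.
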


\begin{proof}

We have to count FC heaps, corresponding to FC elements in $\Aaff_{n-1}$, with respect to their number of vertices. Take a large enough such FC heap $H$: we will need to assume that $|H_{s_0}|> n/2$, which holds as soon as $|H|$ is large enough (in fact $|H|\geq n^2$  as is easily seen through the alternating condition). For any $i\in\{0,\ldots,n-1\}$, denote the elements of the chain $H_{s_i}$ by $s_i^{(1)}<s_i^{(2)}<\cdots<s_i^{(h_i)}$. Let $k$ be the number of such indices $i$ satisfying $s_{i+1}^{(1)}<s_{i}^{(1)}$. Notice that $k\in\{1,\ldots,n-1\}$. 

Consider now the ascending chain $s_0^{(1)}<s_1^{(j_1)}<\cdots<s_{n-k}^{(j_{n-k})}$ where $j_0:=1$, $j_{i+1}-j_{i}$ is $1$ if $s_{i+1}^{(1)}<s_{i}^{(1)}$ and $0$ otherwise. Consider also the descending chain $s_{n-k}^{(j_{n-k})}>s_{n-k+1}^{(j_{n-k+1})}>\cdots>s_n^{(j_n)}=s_0^{(j_n)}$ where $j_{i+1}-j_{i}$ is $-1$ if $s_{i+1}^{(1)}>s_{i}^{(1)}$ and $0$ otherwise. Now $j_n=1$ as a quick computation will show; the construction is illustrated in Figure~\ref{fig:decompoAtilde}, left.

 Call $H_{low}$ the poset induced by the $s_i^{(j)}$ for all $i\in\{0,\ldots,n-1\}$ and all $j\in\{1,\dots,j_i\}$. It is isomorphic to a Ferrers diagram included in the box $k\times(n-k)$, where $s_i$-vertices correspond to cells in the $i$th diagonal of the box; conversely, all such diagrams give valid posets $H_{low}$, and we recall that they have $\qbi{n}{k}{q}$ as generating polynomial by Lemma~\ref{lemma:partitionsboite}. The same construction can be made on the top part of the heap (which amounts to performing our construction on the dual heap of $H$), resulting in a poset $H_{high}$. Remark that thanks to our assumption $|H_{s_0}|> n/2$, it is easily seen that $H_{low}$ and $H_{high}$ are disjoint. Notice that the same integer $k$ occurs in both constructions of $H_{low}$ and $H_{high}$.

The remaining vertices of $H$ are easy to count; write $m=|H_{s_0}|$. Then all the $s_i$-vertices in $H-(H_{low}\cup H_{high})$, for $i=0,1,\ldots,n-1$, are counted by 
\[m,m-1,\ldots,\underbrace{m-k,m-k,\ldots,m-k}_{n-2k-1},m-k+1,\ldots,m-1,\]
so  the total size of $H-(H_{low}\cup H_{high})$ is given by $(m-k)n+k^2$. 

\begin{figure}[!ht]
\begin{center}
\includegraphics[height=8cm]{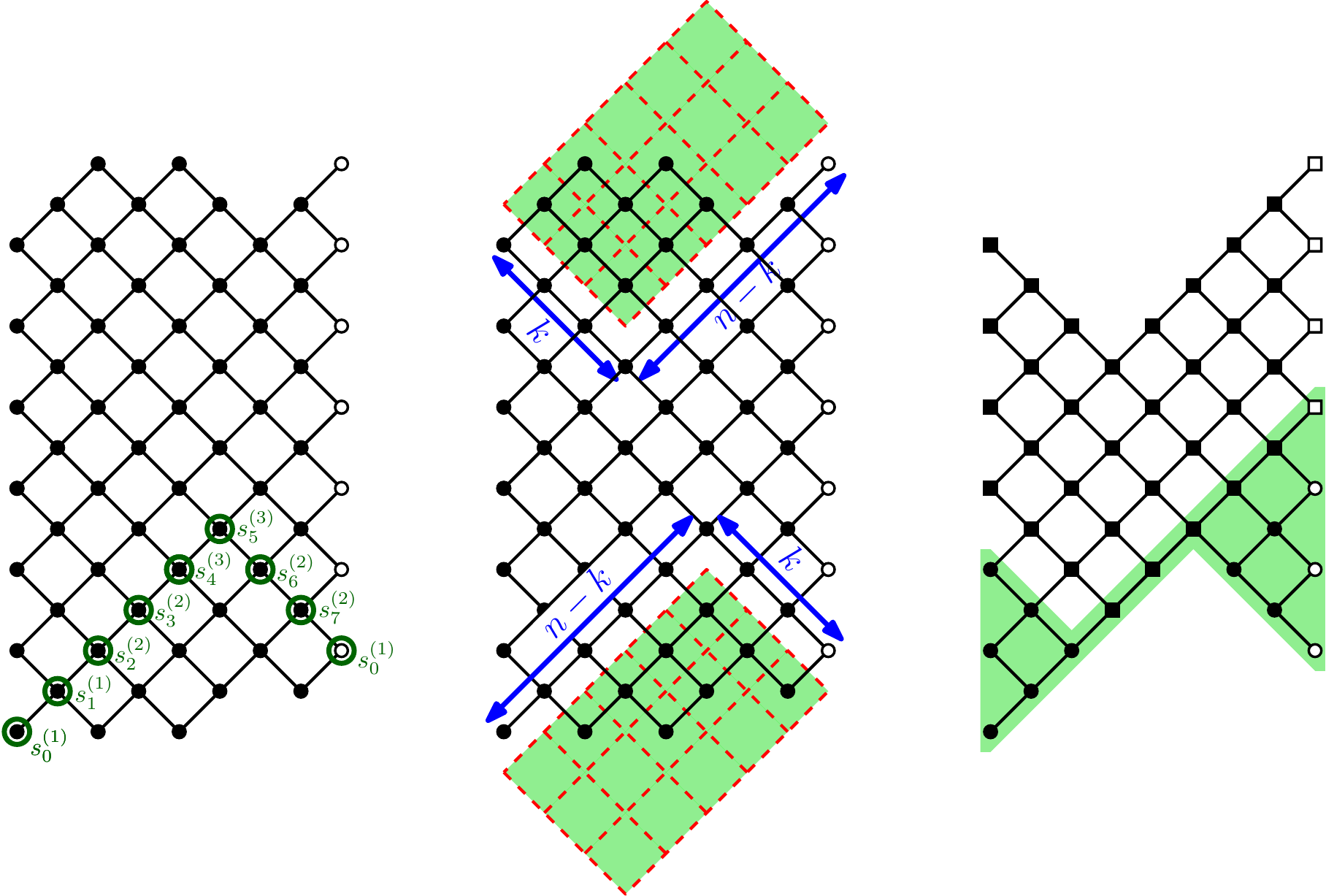}
\caption{\label{fig:decompoAtilde} Decomposition of a FC heap of type $\Aaff_7$.}
\end{center}
\end{figure}

Therefore we have
\begin{align*}
\aff{A}_{n-1}^{FC}(q)&\equiv\sum_{m>n/2}\sum_{k=1}^{n-1}\qbi{n}{k}{q}^2q^{(m-k)n+k^2}=\frac{q^{\lceil n/2 \rceil n}}{1-q^n}\sum_{k=1}^{n-1}\qbi{n}{k}{q}^2q^{k^2-kn}\\
       &\equiv\frac{1}{1-q^n}\sum_{k=1}^{n-1}\qbi{n}{k}{q}^2q^{k^2},
\end{align*}
which, by using \eqref{durfee} in the case $a=b=n$, is equal to the right-hand side of \eqref{eq:newAaffine}.
\end{proof}

From this formula we can deduce the minimal period in type $\aff{A}_{n-1}$.

\begin{proposition}\label{prop:exactperiodaaffine}
In type $\aff{A}_{n-1}$, the growth function $(a^{(n)}_l)_{l\geq 0}$ is ultimately periodic, with minimal period equal to  $p^{\alpha-1}$ if $n=p^\alpha$ is a prime power, and to $n$ otherwise.
\end{proposition}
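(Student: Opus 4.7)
The plan is to combine Proposition~\ref{prop:aaffine} with Corollary~\ref{coro:partialfractionperiod}. Setting $P(q) = \qbi{2n}{n}{q} - 2$, the partial fraction identity \eqref{eq:parfrac} expresses $\aff{A}_{n-1}^{FC}(q)$, modulo a polynomial, as $\frac{1}{n}\sum_{j=0}^{n-1} \frac{P(\xi_n^{-j})}{1 - q\xi_n^j}$, and Corollary~\ref{coro:partialfractionperiod} then reduces the problem to identifying the set $\mathbb{U}_f$ of surviving poles and computing the lcm of their multiplicative orders $\operatorname{order}(\xi_n^j) = n/(n,j)$.

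The central technical step is to evaluate $\qbi{2n}{n}{\xi_n^{-j}}$ at each $n$-th root of unity. Writing $\xi_n^{-j} = \xi_{2n}^{-2j}$ and applying Lemma~\ref{lemma:qbinrootunity} with top index $2n$, bottom index $n$ and exponent $-2j$, one has $(2n, 2j) = 2(n,j)$ and the divisibility $2n \mid n \cdot 2(n,j)$ is automatic, which yields
\[
\qbi{2n}{n}{\xi_n^{-j}} = \bi{2(n,j)}{(n,j)}.
\]
Since $\bi{2d}{d} > 2$ for $d \geq 2$ while $\bi{2}{1} = 2$, one obtains $P(\xi_n^{-j}) \neq 0$ if and only if $(n,j) > 1$ (note this includes $j = 0$, since $(n,0) = n \geq 2$). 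Therefore $\mathbb{U}_f = \{\xi_n^j : 0 \leq j \leq n-1,\ (n,j) > 1\}$, and via the bijection $d \leftrightarrow n/d$ on divisors of $n$, the minimal period equals the least common multiple of all proper divisors of $n$.

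The conclusion then follows by an elementary case analysis. If $n = p^\alpha$ is a prime power, its proper divisors are $1, p, \ldots, p^{\alpha-1}$, whose lcm is $p^{\alpha-1}$ (equal to $1$ when $\alpha = 1$, as intended). If $n$ has at least two distinct prime factors, then for every prime $p \mid n$ the integer $n/p$ is a proper divisor retaining the full $p_k$-adic valuation of $n$ for each prime $p_k \neq p$; taking the lcm over all such $n/p$ already recovers $n$. The only delicate point in the proof is the reindexing $\xi_n^{-j} = \xi_{2n}^{-2j}$ needed to invoke Lemma~\ref{lemma:qbinrootunity} with the correct gcd; once this is in place, the rest of the argument is a mechanical divisor-lcm computation.
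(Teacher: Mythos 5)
Your proof is correct and follows essentially the same route as the paper's: Proposition~\ref{prop:aaffine}, partial fractions via \eqref{eq:parfrac}, evaluation of $\qbi{2n}{n}{q}$ at $n$th roots of unity through Lemma~\ref{lemma:qbinrootunity}, then Corollary~\ref{coro:partialfractionperiod} reducing the period to the lcm of the strict divisors of $n$. The only cosmetic differences are your reindexing $\xi_n^{-j}=\xi_{2n}^{-2j}$ (the paper uses $\xi_n^{n-j}=\xi_{2n}^{2(n-j)}$ so the exponent stays nonnegative, but since $(2n,2n-2j)=2(n,j)$ this changes nothing) and your lcm argument via the divisors $n/p$ instead of the paper's coprime factorization $n=n_1n_2$.
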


\begin{proof}
We expand \eqref{eq:newAaffine} into partial fractions, yielding:
\begin{equation}\label{fractionexpansiona}
\aff{A}_{n-1}^{FC}(q)\equiv\frac{1}{n}\sum_{j=0}^{n-1}\frac{a_{n,j}}{1-q\xi_n^{j}},
\end{equation}
where $\displaystyle a_{n,j}=\qbi{2n}{n}{\xi_n^{n-j}}-2$ thanks to  \eqref{eq:parfrac}. From Lemma~\ref{lemma:qbinrootunity} with $n$ replaced by $2n$, $k$ by $n$ and $j$ by $2j$, we get by setting $d:=(n,j)$:
\begin{equation}\label{coeffa}
a_{n,j}=\bi{2(n,n-j)}{(n,n-j)}-2=\bi{2d}{d}-2.
\end{equation}
This shows that $a_{n,j}\neq 0$ if and only if $d> 1 $. We now use Corollary~\ref{coro:partialfractionperiod}, which says that the minimal period is the least common multiple of the numbers $n/d$ for $j=0,1,\ldots,n-1$ such that $d> 1$. Hence this minimal period is also the least common multiple of all {\em strict} divisors of $n$.

If $n=p^\alpha$, these divisors are $p^\beta$ for $\beta=0,\ldots,\alpha-1$, and the least common multiple of these is $p^{\alpha-1}$. If $n$ has more than one prime factor, it can be written $n=n_1n_2$ with $(n_1,n_2)=1$ and $n_1,n_2<n$. Then the least common multiple of $n_1$ and $n_2$ is $n$, which achieves the proof.
\end{proof}

\begin{remark}\label{rk:hanusajones}
Equations \eqref{eq:HJ}--\eqref{eq:newAaffine} give three expressions for the series $\aff{A}_{n-1}^{FC}(q)$ up to a polynomial. This entails that the numerators are equal up to a multiple of $1-q^n$. It seems a challenging problem to prove combinatorially these equalities. In particular, expressions at the numerators on the right sides of  \eqref{eq:HJ} and \eqref{eq:newAaffine} can be identified by noting that 
$\sum_{k=0}^{n}\qbi{n}{k}{q}^2$ and $\qbi{2n}{n}{q}$ are equal for all $q=\xi_n^j$ ($0\leq j\leq n-1$), which is easily seen by using Lemma~\ref{lemma:qbinrootunity}. 
\end{remark}

\begin{corollary}\label{coro:exactperiodinvaaffine}
The growth function $(\overline{a}_l^{(n)})_{l\geq0}$  for fully commutative involutions of type $\Aaff_{n-1}$ is ultimately periodic with minimal period $n$ if $n$ is even. If $n$ is odd, there are finitely many fully commutative involutions of type $\Aaff_{n-1}$.
\end{corollary}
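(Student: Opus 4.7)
The plan is to mirror the strategy used in the proof of Proposition~\ref{prop:exactperiodaaffine}: first establish, modulo a polynomial, a closed rational expression for the involution generating function $\overline{\aff{A}}_{n-1}^{FC}(q)$, and then extract the minimal period from its partial fraction decomposition by combining~\eqref{eq:parfrac}, Lemma~\ref{lemma:qbinrootunity}, and Corollary~\ref{coro:partialfractionperiod}. The key input is that FC involutions in $\Aaff_{n-1}$ correspond, via the heap encoding, to \emph{self-dual} FC heaps, which allows one to adapt the decomposition $H=H_{low}\cup H_{mid}\cup H_{high}$ from the proof of Proposition~\ref{prop:aaffine}.

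Carrying out this adaptation, self-duality forces $H_{high}$ to be determined by $H_{low}$ as its mirror image, so that only one Ferrers diagram factor $\qbi{n}{k}{q}$ survives in place of the squared factor of~\eqref{eq:newAaffine}, and the middle strip $H_{mid}$ must be palindromic. For $n$ even I expect this to produce a formula of the shape
\[
\overline{\aff{A}}_{n-1}^{FC}(q)\equiv\frac{U(q)}{1-q^{n}},
\]
with $U(q)$ an explicit polynomial involving weighted $q$-binomials (the weight reflecting the half of the palindromic middle). Applying the partial fraction expansion together with Lemma~\ref{lemma:qbinrootunity}, one evaluates $U(\xi_n^{-j})$ in terms of an ordinary binomial coefficient depending on $d=(n,j)$, and the goal is to show that the residue is nonzero for some $j$ with $(n,j)=1$; by Corollary~\ref{coro:partialfractionperiod}, the minimal period would then be exactly $n$. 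The contrast with the non-involution case (where residues vanish precisely when $d=1$, producing the prime-power phenomenon) comes from the fact that the palindromic constraint introduces an extra $q^{n/2}$-type shift which, at primitive roots of unity, no longer kills the residue.

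For $n$ odd, the palindromicity condition on the middle strip is incompatible with the alternating labelling on a cycle of odd length: a self-conjugate central layer on $C_n$ with $n$ odd would force two adjacent positions to share the same label, contradicting the alternating condition that characterises FC heaps in type $\Aaff_{n-1}$. Hence $H_{mid}$ is forced to have bounded height, long self-dual FC heaps do not exist, and $\overline{\aff{A}}_{n-1}^{FC}(q)$ is a polynomial, which proves finiteness. The main obstacle will be the first step: formalising the palindromic heap decomposition in the involution setting and identifying the precise numerator $U(q)$, so as to verify that a primitive $n$-th root of unity does occur among the non-vanishing residues in the even case; the odd case should then follow directly from the structural obstruction.
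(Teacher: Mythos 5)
Your overall strategy coincides with the paper's (FC involutions $\leftrightarrow$ vertically symmetric heaps, the decomposition from the proof of Proposition~\ref{prop:aaffine}, then \eqref{eq:parfrac}, Lemma~\ref{lemma:qbinrootunity} and Corollary~\ref{coro:partialfractionperiod}), but the two steps you leave open are precisely the content of the proof, and your guesses for them point the wrong way. The decisive consequence of requiring $H_{high}$ to be the mirror image of $H_{low}$ is that the parameter $k$ of the decomposition must satisfy $k=n-k$, i.e.\ $k=n/2$; there is no surviving sum over $k$, and no extra ``palindromic middle'' weight, since the middle part is rigid of size $(m-n/2)n+n^2/4$ once $m=|H_{s_0}|$ is fixed. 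One gets $\aff{A}_{n-1}^{FCI}(q)\equiv q^{n^2/4}\,\qbi{n}{n/2}{q^2}\,/(1-q^{n})$, with argument $q^{2}$ because the mirror copy doubles the contribution of the single Ferrers diagram. This matters for your residue step: with the shape you posit (a single factor $\qbi{n}{k}{q}$ in the variable $q$), Lemma~\ref{lemma:qbinrootunity} gives $\qbi{n}{k}{\xi_n^{j}}=0$ whenever $(n,j)=1$ and $0<k<n$, so the residues at primitive $n$th roots of unity would vanish and your criterion would fail; moreover your heuristic that a ``$q^{n/2}$-type shift'' rescues the residue cannot be right, since a monomial prefactor evaluates to a nonzero root of unity and never affects vanishing. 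What actually rescues it is the substitution $q\mapsto q^{2}$: at $q=\xi_n^{-j}$ the evaluation is $\bi{(n,2j)}{(n,2j)/2}\neq 0$ (note $(n,2j)$ is even because $n$ is), so in fact \emph{all} residues are nonzero and Corollary~\ref{coro:partialfractionperiod} gives minimal period exactly $n$.

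For $n$ odd, your argument is not a proof: the assertion that a ``self-conjugate central layer'' on an odd cycle forces two adjacent positions to carry the same label is neither defined nor established (a self-dual heap has no distinguished horizontal layer, and the alternating condition constrains the chains $H_{\{s,t\}}$, not a cross-section). The true obstruction is global rather than local: the integer $k$ records how a long heap winds around the cycle, the dual heap has parameter $n-k$, and self-duality forces $k=n-k$, which is impossible for $n$ odd; hence long FC involutions do not exist. (If you want an argument in the symmetry style you sketch, one can show that a label-preserving order-reversing involution forces every nonempty edge chain $H_{\{s_i,s_{i+1}\}}$ to have odd length, hence adjacent chain lengths $|H_{s_i}|,|H_{s_{i+1}}|$ to have opposite parities, which is impossible around an odd cycle once all chains are nonempty --- but this has to be stated and proved, and it is not what you wrote.) So as it stands the proposal has genuine gaps both in the even case (the formula and the nonvanishing of the relevant residues) and in the odd case (the finiteness mechanism).
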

\begin{proof}
As noticed in \cite{BJN_families, St3}, the heaps $H$ corresponding to FC involutions are those that are vertically symmetric. In the proof of Proposition~\ref{prop:aaffine} which focuses on elements of large length, this symmetry condition means that $H_{low}$ and $H_{high}$ are mirror images, which entails in particular $k=n-k$. This shows that there is no such configuration when $n$ is odd, so there are finitely many fully commutative involutions in this case. If $n$ is even, we get easily:
\begin{equation}\label{invaaffinefg}
\aff{A}_{n-1}^{FCI}(q)\equiv\sum_{m>n/2}\qbi{n}{n/2}{q^2}q^{(m-n/2)n+n^2/4}\equiv\frac{q^{n^2/4}}{1-q^n}\qbi{n}{n/2}{q^2},
\end{equation}
and the coefficients appearing in the partial fraction expansion of this series are never equal to $0$. Corollary~\ref{coro:partialfractionperiod} then yields the result.
\end{proof}

\section[Type CBD affine]{Types $\aff{C}$, $\aff{B}$ and $\aff{D}$}
\label{CBDaffine}

As shown in~\cite{BJN}, one has ultimate periodicity also in the coefficients of $\aff{C}^{FC}_n(q)$.  Indeed, from Expression \eqref{eq:Caffine} one obtains that these coefficients have period $n+1$. To obtain an expression of the minimal period, we will use a new expression for $\aff{C}^{FC}_n(q)$.

\begin{proposition}\label{prop:caffine}
For any positive integer $n$, the generating function, according to the length, for fully commutative elements in $\aff{C}_{n}$ takes the form:
\begin{equation}
\label{eq:newCaffine}
\aff{C}_{n}^{FC}(q)\equiv\frac{(-q;q)_n^2}{1-q^{n+1}}+\frac{2n}{1-q}.
\end{equation}
\end{proposition}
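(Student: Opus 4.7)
The plan is to adapt the combinatorial decomposition used in the proof of Proposition~\ref{prop:aaffine} to type $\aff{C}_n$. Recall from the classification before equation~\eqref{eq:Caffine} that long FC heaps of $\aff{C}_n$ split into two infinite families: alternating heaps and zig-zag heaps; these will be treated separately and will produce the two summands on the right-hand side of~\eqref{eq:newCaffine}.

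For the zig-zag heaps, which correspond to subwords of the periodic word $(t s_1 \cdots s_{n-1} u s_{n-1} \cdots s_1)^{\infty}$ of period $2n$, the counting is easy: for every length $\ell$ larger than some bound, each of the $2n$ cyclic starting positions produces exactly one zig-zag subword of length $\ell$, so this family contributes $2n/(1-q)$ to the generating function up to a polynomial.

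For alternating heaps I will decompose a long heap $H$ into a bottom part $H_{low}$, a top part $H_{high}$, and a uniform middle layer, exactly as in the proof of Proposition~\ref{prop:aaffine}. Label the generators of $\aff{C}_n$ by $v_0 = t, v_1 = s_1, \ldots, v_{n-1} = s_{n-1}, v_n = u$, and assume that $H$ is long enough that every chain $H_{v_i}$ has many elements (this exclusion only affects finitely many elements per length and hence can be absorbed into the ``polynomial'' error). Consider the minimal elements $v_0^{(1)}, \ldots, v_n^{(1)}$ and introduce the parameter $k$ counting the indices $i$ such that $v_{i+1}^{(1)} < v_i^{(1)}$. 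One then shows, using the alternating condition, that $H_{low}$ is the union of a Ferrers diagram fitting in a $k \times (n-k)$ rectangle with a fixed staircase shape of area $k(k+1)/2$ produced by the two label-$4$ edges at the ends of the Coxeter graph. By Lemma~\ref{lemma:partitionsboite}, such configurations are enumerated by $\qbi{n}{k}{q}\, q^{k(k+1)/2}$, and summing over $k = 0, \ldots, n$ via identity~\eqref{staircase} yields $(-q;q)_n$. The dual argument applied to the top of $H$ gives another factor $(-q;q)_n$. The remaining bulk contributes the factor $1/(1-q^{n+1})$ after summation over a ``mean height'' parameter $m$, where the exponent $n+1$ reflects the $n+1$ columns of the heap. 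Combining the alternating and zig-zag contributions produces~\eqref{eq:newCaffine}.

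The main obstacle will be the precise identification of $H_{low}$ (and, by duality, $H_{high}$) near the two boundary columns $v_0$ and $v_n$. Because the Coxeter graph of $\aff{C}_n$ is a linear path rather than a cycle, and because the end edges carry the label $4$ instead of $3$, the alternating condition produces a slightly different behavior at the extremes than in type $\Aaff$, and the correct description of the boundary is the delicate point. The crucial task is to verify that this boundary effect contributes exactly the offset $q^{k(k+1)/2}$, so that identity~\eqref{staircase} applies in place of the Durfee-square identity~\eqref{durfee} used in the proof of Proposition~\ref{prop:aaffine}; once this is established, the remainder of the computation is routine.
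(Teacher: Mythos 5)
Your overall route is the same as the paper's: split the long FC elements into zig-zag heaps (contributing $2n/(1-q)$) and alternating heaps, decompose a long alternating heap into a bottom part $H_{low}$, a top part $H_{high}$ and a bulk, and convert the resulting sum into $(-q;q)_n^2/(1-q^{n+1})$ via the staircase identity~\eqref{staircase} instead of the Durfee identity~\eqref{durfee} used for Proposition~\ref{prop:aaffine}. However, the step you yourself flag as ``the crucial task'' is precisely the content of the proof, and your sketch of it misattributes where the staircase comes from. In the paper's accounting $H_{low}$ carries no staircase at all: setting $s_0=t$, $s_n=u$ and letting $j$ be the number of indices $i$ with $s_{i+1}^{(1)}<s_i^{(1)}$, the part $H_{low}$ is exactly a Ferrers diagram in a $j\times(n-j)$ box, counted by $\qbi{n}{j}{q}$ alone (Lemma~\ref{lemma:partitionsboite}); dually $H_{high}$ is a diagram in an $(n-k)\times k$ box counted by $\qbi{n}{k}{q}$. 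The staircase exponents arise from the residual part $H-(H_{low}\cup H_{high})$: with $m=|H_{s_0}|$ its column sizes read $m,m-1,\ldots,m-(n-j),\ldots,m-(n-j),\ldots,m+j-k$, so its total size is $(m-(n-j))(n+1)+\bi{n-j+1}{2}+\bi{n-k+1}{2}$ --- one triangle at the $t$-end governed by the \emph{bottom} parameter $j$ and one at the $u$-end governed by the \emph{top} parameter $k$, not ``a fixed staircase produced by the two label-$4$ edges'' sitting inside $H_{low}$. Reindexing $j\mapsto n-j$, $k\mapsto n-k$ and summing over $m>n$ yields $\frac{q^{n+1}}{1-q^{n+1}}\left(\sum_{i=0}^{n}\qbi{n}{i}{q}q^{i(i+1)/2}\right)^2$, and~\eqref{staircase} finishes the computation.

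A second point you pass over silently: the claim that ``the dual argument applied to the top gives another factor $(-q;q)_n$'' requires that the bottom parameter $j$ and the top parameter $k$ vary independently and that the joint count factorizes. This independence is exactly what distinguishes type $\aff{C}_n$ (linear Coxeter graph, $j,k\in\{0,\ldots,n\}$ arbitrary) from type $\Aaff_{n-1}$, where the cyclic structure forces the same parameter on top and bottom and leads to~\eqref{durfee} rather than a perfect square. Since your proposal defers the boundary and bulk analysis and assumes this factorization without argument, it is a correct plan with the right target identity, but the combinatorial accounting that constitutes the proof is missing, and the mechanism you propose for the offset $q^{k(k+1)/2}$ is not the one that actually works.
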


\begin{proof}
As explained in Section~\ref{sec:heapsfullycomm}, there are two kinds of long FC elements of type $\aff{C}_{n}$. For the ones corresponding to zigzag heaps, the generating function is given by the second terms in~\eqref{eq:newCaffine}. Therefore it is enough to focus on the alternating elements.

 Fix an alternating  FC element $w\in\aff{C}_{n}$, and denote by $H$ the corresponding alternating heap; we have to count such heaps with respect to their number of vertices. We will need to assume that $|H_{t}|> n$, which holds as soon as $|H|$ is large enough (larger than $3n(n+1)/2$, as can easily be seen from the alternating condition). Set $s_0:=t$ and $s_n:=u$.  For any $i\in\{0,\ldots,n\}$, denote the elements of the chain $H_{s_i}$ by $s_i^{(1)}<s_i^{(2)}<\cdots<s_i^{(h_i)}$. Let $j$ be the number of such indices $i$ satisfying $s_{i+1}^{(1)}<s_{i}^{(1)}$. Notice that $j\in\{0,\ldots,n\}$.

Consider now the ascending chain $s_0^{(1)}<s_1^{(v_1)}<\cdots<s_{n-j}^{(v_{n-j})}$ where $v_0:=1$, $v_{i+1}-v_{i}$ is $1$ if $s_{i+1}<s_{i}$ and $0$ otherwise. Consider also the descending chain $s_{n-j}^{(v_{n-j})}>s_{n-j+1}^{(v_{n-j+1})}>\cdots>s_n^{(v_n)}$ where $v_{i+1}-v_{i}$ is $-1$ if $s_{i+1}>s_{i}$ and $0$ otherwise. Now $v_n=1$ as a quick computation will show. Call $H_{low}$ the subheap with vertices $s_i^{v}$ for all $i\in\{0,\ldots,n\}$ and all $v\in\{1,\dots,v_i\}$: it forms a Ferrers diagram included in the box $j\times(n-j)$,  and any diagram gives a valid  $H_{low}$. By Lemma~\ref{lemma:partitionsboite}, such Ferrers diagrams have $\qbi{n}{j}{q}$ as generating function. The same construction can be made on the top part of the heap (which amounts to performing our construction on heap dual to $H$), resulting in a subset $H_{high}$, corresponding this time to a  Ferrers diagram included in the box $(n-k)\times k$, for a $k\in\{0,\ldots,n\}$.  These constructions are illustrated in Figure~\ref{fig:decompoCtilde}.

\begin{figure}[!ht]
\begin{center}
\includegraphics[height=8cm]{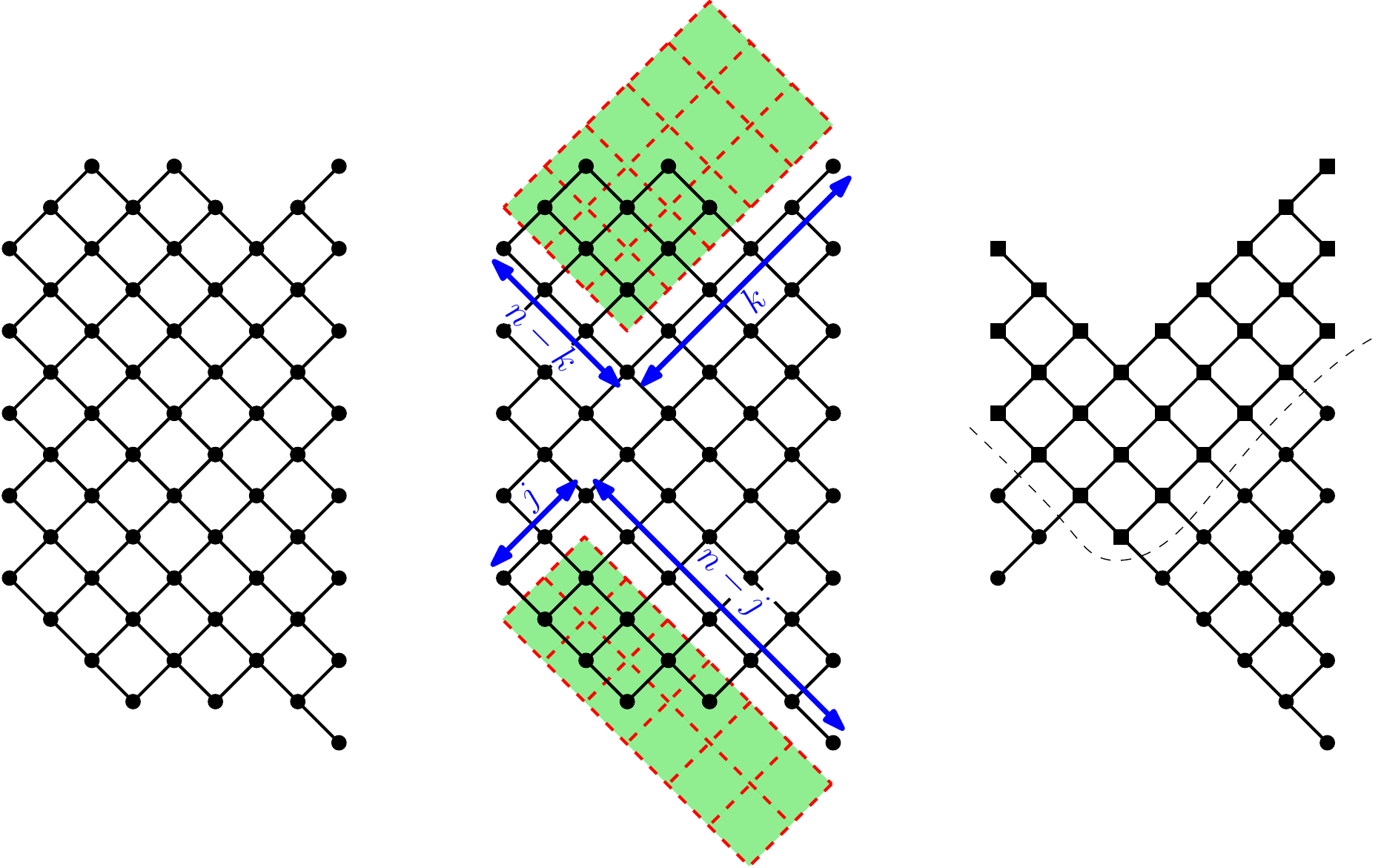}
\caption{\label{fig:decompoCtilde} Decomposition of an alternating FC heap of type $\aff{C}_8$.}
\end{center}
\end{figure}
The remaining vertices of $H$ are easy to count: let us assume  without loss of generality that $n-j\leq k$. Write $m=|H_{s_0}|$. Then the number of $s_i$-vertices in $H-(H_{low}\cup H_{high})$, for $i=0,1,\ldots,n$, is given by 
\[m,m-1,\ldots,\underbrace{m-(n-j),\ldots,m-(n-j)}_{k+j-n+1},m-(n-j)+1,\ldots,m+j-k,\]
so  the total size of $H-(H_{low}\cup H_{high})$ is given by $(m-(n-j))(n+1)+\bi{n-j+1}{2}+\bi{n-k+1}{2}$. 
Therefore, replacing  $j$ (\emph{resp.} $k$)  by $n-j$ (\emph{resp.} $n-k$), we have
\begin{eqnarray*}
\aff{C}_{n}^{FC}(q)-\frac{2n}{1-q}&\equiv&\sum_{m>n}\sum_{k=0}^{n}\sum_{j=0}^n\qbi{n}{k}{q}\qbi{n}{j}{q}q^{\bi{j+1}{2}+\bi{k+1}{2}}\\
&=&\frac{q^{n+1}}{1-q^{n+1}}\left(\sum_{i=0}^{n}\qbi{n}{i}{q}q^{i(i+1)/2}\right)^2,
\end{eqnarray*}
which, by using \eqref{staircase}, yields \eqref{eq:newCaffine}.

\end{proof}

We can deduce the minimal period in type $\aff{C}_{n}$. Let $(c^{(n)}_l)_{l\geq 0}$ be the growth function of FC elements in type $\aff{C}_{n}$. In the rest of this section we let $2m+1$ be the largest odd divisor of $n+1$, and we write $n+1=2^{\alpha}(2m+1)$.

\begin{proposition}\label{prop:exactperiodcaffine}
 The growth function $(c^{(n)}_l)_{l\geq 0}$ is ultimately periodic with minimal period equal to $2m+1$.
\end{proposition}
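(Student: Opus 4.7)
The plan is to apply Corollary~\ref{coro:partialfractionperiod} to the expression of Proposition~\ref{prop:caffine}, so everything reduces to identifying the finite set $\mathbb{U}_f$ of roots of unity that actually contribute to the partial fraction decomposition of $\aff{C}_n^{FC}(q)$. Setting $N := n+1$ and $P(q) := (-q;q)_n^2$, formula \eqref{eq:parfrac} gives
\[
\aff{C}_n^{FC}(q)\equiv\frac{2n}{1-q}+\frac{1}{N}\sum_{j=0}^{N-1}\frac{P(\xi_N^{-j})}{1-q\xi_N^{j}},
\]
so the goal is to determine, for each $j\in\{0,\ldots,N-1\}$, whether the coefficient of $(1-q\xi_N^{j})^{-1}$ vanishes once the $\frac{2n}{1-q}$ term is absorbed at $j=0$.

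The key computation is the evaluation of $P(\xi_N^{-j})=\prod_{i=1}^{n}(1+\xi_N^{-ji})^2$ at roots of unity. Set $d:=(j,N)$ and $f:=N/d$, and let $\zeta=\xi_N^{-j}$, which has multiplicative order $f$. Just as in the proof of Lemma~\ref{lemma:qbinrootunity}, the product $\prod_{i=0}^{N-1}(y-\zeta^i)$ equals $(y^f-1)^d$. Specializing at $y=-1$ and extracting the $i=0$ factor yields
\[
\prod_{i=1}^{n}(1+\xi_N^{-ji})=\begin{cases}0 & \text{if $f$ is even,}\\ 2^{d-1} & \text{if $f$ is odd.}\end{cases}
\]
Writing $N=n+1=2^{\alpha}(2m+1)$, the condition that $f=N/(j,N)$ be odd is equivalent to $2^{\alpha}\mid j$. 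Hence $P(\xi_N^{-j})\neq 0$ precisely for $j=2^{\alpha}k$ with $k\in\{0,\ldots,2m\}$, and for such $j$ the root of unity $\xi_N^{j}=\xi_{2m+1}^{k}$ runs through all $(2m+1)$-th roots of unity.

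It remains to handle $q=1$: I need to check that the pole at $q=1$ survives after combining the partial fraction contribution with $\frac{2n}{1-q}$. The coefficient is $\frac{P(1)}{N}+2n=\frac{4^n}{n+1}+2n>0$, so $1\in\mathbb{U}_f$. Therefore $\mathbb{U}_f$ is exactly the set of $(2m+1)$-th roots of unity, and by Corollary~\ref{coro:partialfractionperiod} the minimal period equals the least common multiple of their orders, which is $2m+1$ (realized by $\xi_{2m+1}$ itself).

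The main obstacle is the evaluation at roots of unity of the $q$-Pochhammer $(-q;q)_n$, together with the bookkeeping identifying exactly which $j$ yield an odd cyclotomic quotient $f$; once this is in hand, the parity analysis of $j$ modulo $2^\alpha$ and the observation that the $q=1$ coefficient is not killed by $\tfrac{2n}{1-q}$ are straightforward.
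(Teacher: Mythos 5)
Your proof is correct and follows essentially the same route as the paper: partial fraction decomposition of the expression from Proposition~\ref{prop:caffine}, evaluation of $(-q;q)_n^2$ at $(n+1)$-th roots of unity via the factorization $\prod_{i}(y-\zeta^i)=(y^f-1)^d$, identification of the nonvanishing coefficients as those $j$ with $2^\alpha\mid j$, and then Corollary~\ref{coro:partialfractionperiod}. Your explicit check that the pole at $q=1$ is not cancelled by the $\tfrac{2n}{1-q}$ term is a slightly more careful version of the paper's remark that this term has period $1$, but the argument is the same in substance.
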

\begin{proof}
We expand \eqref{eq:newCaffine} into partial fractions, yielding:
\begin{equation}\label{eq:exptypectilde}
\aff{C}_{n}^{FC}(q)\equiv\frac{1}{n+1}\sum_{j=0}^{n}\frac{c_{n,j}}{1-q\xi_{n+1}^{j}}+\frac{2n}{1-q},
\end{equation}
where by~\eqref{eq:parfrac} we have
\begin{equation}
\label{eq:cnj_long}
c_{n,j}=\left.(-q;q)_n^2\right|_{q=\xi_{n+1}^{-j}}=\left(\prod_{l=1}^{n}(1+\xi_{n+1}^{-jl}\right)^2=\frac{1}{4}\left(\prod_{l=0}^{n}(1+\xi_{n+1}^{-jl}\right)^2.
\end{equation}
Now $\xi_{n+1}^{-j}$ has order $f:=(n+1)/d$, where $d:=(n+1,j)$, so we get  
$$c_{n,j}=\frac{1}{4}\left(\prod_{l=0}^{f-1}(1+\xi_{f}^l)\right)^{2d}.$$ Moreover $\displaystyle X^{f}-1=\prod_{l=0}^{f-1}(X-\xi_{f}^l)$ yields $\displaystyle(-1)^{f}-1=(-1)^{f}\prod_{l=0}^{f-1}(1+\xi_{f}^l)$ so that
\begin{equation}
\label{eq:coeffc}
c_{n,j}=\frac{1}{4}\left(1-(-1)^{\frac{n+1}{d}}\right)^{2d}.
\end{equation}
This shows that $c_{n,j}\neq 0$ if, and only if $(n+1)/d$ is odd (this can also be deduced directly from~\eqref{eq:cnj_long}). From Corollary~\ref{coro:partialfractionperiod}, we know that the minimal period of $\aff{C}_{n}^{FC}(q)-2n/(1-q)$ is equal to the least common multiple of all odd divisors of $n+1$, which completes the proof, as $2n/(1-q)$ has period $1$.
\end{proof}

\begin{corollary}\label{coro:exactperiodinvcaffine}
The growth function $(\overline{c}_l^{(n)})_{l\geq0})$ of affine fully commutative involutions in type $\aff{C}_{n}$ is ultimately periodic with minimal period $2(2m+1)$.
\end{corollary}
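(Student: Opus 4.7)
My plan is to follow the same arc as Corollary~\ref{coro:exactperiodinvaaffine}: use the fact from~\cite{BJN_families, St3} that FC involutions correspond to vertically symmetric FC heaps, impose this symmetry on the decomposition from the proof of Proposition~\ref{prop:caffine}, and extract the minimal period from the resulting generating function via partial fractions.

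For the alternating long FC involutions in $\aff{C}_n$, vertical symmetry forces $H_{low}$ and $H_{high}$ from the proof of Proposition~\ref{prop:caffine} to be mirror images; this entails $j = k$ and collapses the factor $\qbi{n}{j}{q}\qbi{n}{k}{q}$ into $\qbi{n}{j}{q^2}$, since each cell of $H_{low}$ is paired with its mirror in $H_{high}$ and thus contributes twice to the total $q$-weight. The middle part of the heap is also symmetric, so each unit shift in $m$ now contributes $2(n+1)$ rather than $n+1$ to the total size, replacing the denominator $1-q^{n+1}$ by $1-q^{2(n+1)}$. Applying identity~\eqref{staircase} with $q$ replaced by $q^2$ yields an alternating contribution of the form
$$\frac{q^{c}\,(-q^2;q^2)_n}{1-q^{2(n+1)}}$$
for some explicit constant $c$. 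A direct inspection of palindromic zigzag subwords of $(ts_1\cdots s_{n-1}us_{n-1}\cdots s_1)^\infty$ adds a term $R(q)$ of period at most $2$, so altogether $\aff{C}_n^{FCI}(q) \equiv \frac{q^c(-q^2;q^2)_n}{1-q^{2(n+1)}} + R(q)$.

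To extract the minimal period, I would expand the main term into partial fractions over $2(n+1)$-th roots of unity via~\eqref{eq:parfrac}. Since $(\xi_{2(n+1)}^{-j})^2 = \xi_{n+1}^{-j}$, the residue at $\xi_{2(n+1)}^{-j}$ is proportional to $\prod_{l=1}^n (1+\xi_{n+1}^{-jl})$, which by the calculation leading to~\eqref{eq:coeffc} equals $\pm\tfrac12(1-(-1)^{(n+1)/e})^e$ with $e = \gcd(n+1,j)$, and is therefore nonzero precisely when $(n+1)/e$ is odd. Writing $n+1 = 2^\alpha(2m+1)$, this condition is equivalent to $2^\alpha \mid j$; for such $j = 2^\alpha t$ with $t \in \{0,1,\ldots,2(2m+1)-1\}$, the order of $\xi_{2(n+1)}^{-j}$ simplifies to $2(2m+1)/\gcd(2(2m+1),t)$, and the least common multiple of these orders (over $t$) equals $2(2m+1)$, attained at $t=1$. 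Since $R(q)$ has poles only at $\pm 1$, which for $m\geq 1$ are not primitive $2(2m+1)$-th roots of unity, no cancellation occurs with the main term; Corollary~\ref{coro:partialfractionperiod} then yields the minimal period $2(2m+1)$.

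The main obstacle is the careful bookkeeping in the first step, where the symmetry condition splits into sub-cases depending on whether the symmetry axis crosses a vertex of the $t$-chain or lies between two of them; pinning down $c$ explicitly requires handling each sub-case in the spirit of Corollary~\ref{coro:exactperiodinvaaffine}. A secondary verification is to confirm that, in the edge case $m = 0$ (when $n+1$ is a power of $2$), the numerator $(-q^2;q^2)_n$ cancels all $2(n+1)$-th root-of-unity poles except those at $\pm 1$, so that the claimed period $2$ is indeed attained and no smaller period works.
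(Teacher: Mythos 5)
Your strategy is the same as the paper's (restrict the decomposition of Proposition~\ref{prop:caffine} to vertically symmetric heaps, add the zigzag contribution, extract the period via Corollary~\ref{coro:partialfractionperiod}), but there is a genuine error at the key step: the claim that symmetry forces each unit shift of $m=|H_t|$ to contribute $2(n+1)$, so that the denominator becomes $1-q^{2(n+1)}$. This is false. Self-duality forces $H_{high}$ to be the mirror image of $H_{low}$ (in the paper's convention this reads $j=n-k$ with identical diagrams), but it imposes no parity condition on $m$: adding one element to every column of a self-dual alternating heap keeps every edge-chain an odd-length palindrome with the same end labels, hence keeps the heap self-dual, and increases the length by $n+1$. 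So the correct denominator is $1-q^{n+1}$, and the alternating contribution is $\frac{q^{c}}{1-q^{n+1}}(-q^2;q^2)_n$ for a suitable $c$, as in the paper. Your proposed series $q^{c}(-q^2;q^2)_n/\bigl(1-q^{2(n+1)}\bigr)$ is supported on exponents of a single parity, which is already contradicted in $\aff{C}_2$: the self-dual alternating FC heaps with column heights $(1,2,3)$, $(2,3,2)$, $(3,2,3)$, $(2,3,4)$ over $(t,s_1,u)$ have lengths $6,7,8,9$, so both parities occur and the alternating part has period $n+1=3$, not $2(n+1)=6$. For the same reason, the ``sub-cases depending on whether the symmetry axis crosses a vertex of the $t$-chain'' that you anticipate do not exist: a chain of any length is isomorphic to its reverse, so no parity bookkeeping arises.

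The error is not harmless to the logic even though your final number agrees with the statement. With the correct denominator, the residues are taken at $(n+1)$-th roots of unity and, as in \eqref{eq:coeffc}, they vanish unless the order $(n+1)/d$ is odd for FC elements and, in the involution case, unless it is not divisible by $4$; consequently the alternating part alone has minimal period $2(2m+1)$ only when $n+1$ is even, while for $n+1$ odd it has minimal period $2m+1=n+1$ and the extra factor $2$ must come from the zigzag term $2q^{2n+3}/(1-q^2)$. Your computation attributes that factor $2$ to the alternating part in all cases, which is wrong precisely when $n+1$ is odd. As written the proof therefore does not establish the corollary; correcting the denominator to $1-q^{n+1}$ and redoing the root-of-unity evaluation at $(n+1)$-th roots (then combining with the zigzag term, checking as you do that no cancellation kills a pole of maximal order) recovers the paper's argument.
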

\begin{proof}
To obtain an expression for FC involutions, we need to consider those heaps described in the proof of Proposition~\ref{prop:caffine} which are vertically symmetric. From~\cite{BJN_families}, we know that the corresponding zigzag heaps have $2q^{2n+3}/(1-q^2)$ as generating function, so only focus on alternating heaps. Here the vertical symmetry means that $j=n-k$ and the two Ferrers diagrams identified in the proof of Proposition~\ref{prop:caffine} have to be identical. This shows that
\begin{eqnarray*}
\aff{C}_{n}^{FCI}(q)-\frac{2q^{2n+3}}{1-q^2}&\equiv&\frac{1}{1-q^{n+1}}\sum_{k=0}^{n}\qbi{n}{k}{q^2}q^{(n-k+1)(n-k+2)/2+(k+1)(k+2)/2}\\
&=&\frac{1}{1-q^{n+1}}\sum_{k=0}^{n}\qbi{n}{k}{q^2}q^{k^2-nk+2+n(n+3)/2}\\
&\equiv& \frac{q^{1+(n+1)(n+2)/2}}{1-q^{n+1}}\sum_{k=0}^{n}\qbi{n}{k}{q^2}q^{k^2+k}\\
&=&\frac{q^{1+(n+1)(n+2)/2}}{1-q^{n+1}}(-q^2;q^2)_n,
\end{eqnarray*}
where we have used~\eqref{staircase} with $q$ replaced by $q^2$ for proving the last equality. Finally, the partial fraction decomposition of $\aff{C}_{n}^{FCI}(q)$ takes the form
$$\aff{C}_{n}^{FCI}(q)\equiv\frac{1}{n+1}\sum_{j=0}^{n}\frac{\overline{c}_{n,j}}{1-q\xi_{n+1}^{j}}+\frac{2q}{1-q^2},$$
where by~\eqref{eq:parfrac}
$$\overline{c}_{n,j}=\left.q^{1+(n+1)(n+2)/2}(-q^2;q^2)_n\right|_{q=\xi_{n+1}^{-j}}=\frac{(-1)^{nj}\xi_{n+1}^{j}}{2}\prod_{l=0}^{n}(1+\xi_{n+1}^{2jl}).$$
Set as usual $d:=(n+1,j)$ and $f:=(n+1)/d$, yielding
$$\overline{c}_{n,j}=\frac{(-1)^{nj}\xi_{n+1}^{j}}{2}\left(\prod_{l=0}^{f-1}(1+\xi_{f}^{2l})\right)^{d}.$$
From the factorization of $\displaystyle X^{f}-1$, we derive 
$$\sqrt{-1}^f-1=\prod_{l=0}^{f-1}(\sqrt{-1}-\xi_f^l)\quad\mbox{and}\quad (-\sqrt{-1})^f-1=\prod_{l=0}^{f-1}(-\sqrt{-1}-\xi_f^l),$$ 
and plugging these into the previous expression for $\overline{c}_{n,j}$ yields
\begin{equation}\label{coeffcinv}
\overline{c}_{n,j}=\frac{(-1)^{nj}\xi_{n+1}^{j}}{2}\left(\left(1-\sqrt{-1}^{\frac{n+1}{d}}\right)\left(1-(-\sqrt{-1})^{\frac{n+1}{d}}\right)\right)^{d}.
\end{equation}
This shows  that $\overline{c}_{n,j}\neq 0$ if and only if $(n+1)/d$ is not equal to $0$ modulo $4$. From Corollary~\ref{coro:partialfractionperiod}, we know that the minimal period of $\aff{C}_{n}^{FCI}(q)-2q/(1-q^2)$ is equal to the least common multiple of all  divisors of $n+1$ which are not congruent to $0$ modulo $4$. This is either $2m+1$ or $2(2m+1)$, and as the period corresponding to the term $2q/(1-q^2)$ is $2$, the conclusion follows.
\end{proof}

Finally, we summarize the results corresponding to the affine types  $\aff{B}_{n+1}$ and $\aff{D}_{n+2}$; this achieves the proof of the main theorem stated in the introduction.

\begin{corollary}\label{coro:exactperiodbdaffine}
With the same notations as in Proposition~\ref{prop:exactperiodcaffine}, the growth function in type $\aff{B}_{n+1}$ (\emph{resp.} $\aff{D}_{n+2}$) is ultimately periodic, with minimal period equal to $(2n+1)(2m+1)$  (\emph{resp.} $n+1$).
\end{corollary}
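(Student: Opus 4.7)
The plan is to reduce both computations to the type $\aff{C}_n$ case already handled in Proposition~\ref{prop:exactperiodcaffine}. As mentioned at the end of Section~\ref{sec:heapsfullycomm}, the relations between the generating functions of types $\aff{B}_{n+1}$, $\aff{D}_{n+2}$ and $\aff{C}_n$ (for long FC elements) are simple and will be recalled in this section. My first step is to write these relations explicitly and substitute into them the closed form~\eqref{eq:newCaffine} from Proposition~\ref{prop:caffine}, obtaining rational expressions for $\aff{B}_{n+1}^{FC}(q)$ and $\aff{D}_{n+2}^{FC}(q)$ modulo polynomials. The denominators that should appear are $1-q^{n+1}$ together with $1-q^{2n+1}$ in type $\aff{B}_{n+1}$, and $1-q^{n+1}$ in type $\aff{D}_{n+2}$.

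Next, I would perform a partial fraction decomposition via~\eqref{eq:parfrac}, writing each generating function (modulo polynomials) as $\sum_\xi \alpha_\xi/(1-q\xi)$, with $\xi$ ranging over the relevant roots of unity. The crux of the argument then consists in identifying the set $\mathbb{U}_f$ of $\xi$ with $\alpha_\xi \neq 0$, following the same template as in the proof of Proposition~\ref{prop:exactperiodcaffine}: evaluate each numerator at $\xi_N^{-j}$, use the identity $\prod_{l=0}^{f-1}(1+\xi_f^l)=1-(-1)^f$ with $f=N/(N,j)$, and read off the vanishing condition as an arithmetic constraint on $(N,j)$. Corollary~\ref{coro:partialfractionperiod} then delivers the minimal period as the least common multiple of the orders of the surviving $\xi$'s.

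For type $\aff{D}_{n+2}$, the $\aff{D}/\aff{C}$ relation is expected to remove the parity obstruction ``$(n+1)/d$ odd'' that eliminated certain roots of unity in type $\aff{C}_n$; consequently every $(n+1)$-th root of unity should contribute nontrivially, and the least common multiple of their orders is precisely $n+1$. For type $\aff{B}_{n+1}$, the extra denominator $1-q^{2n+1}$ adds primitive $(2n+1)$-th roots of unity to $\mathbb{U}_f$. Since $(2n+1,n+1)=1$, these new contributions are coprime to the $(2m+1)$-th roots of unity inherited from the $\aff{C}_n$ analysis, so the least common multiple of all surviving orders is exactly $(2m+1)(2n+1)$.

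The main obstacle will be the precise bookkeeping of the vanishing conditions on the $\alpha_\xi$'s, carried out separately in each type with the explicit shape of the $\aff{B}$ and $\aff{D}$ relations from~\cite{BJN}. In type $\aff{D}_{n+2}$ one must verify that no $(n+1)$-th root of unity is killed (otherwise the minimal period would drop strictly below $n+1$, for instance to the odd part $2m+1$ as in type $\aff{C}_n$); in type $\aff{B}_{n+1}$ one must in particular check that at least one primitive $(2n+1)$-th root of unity survives, so that the factor $2n+1$ genuinely appears in the minimal period. Both verifications should follow from computations on products of roots of unity analogous to~\eqref{eq:coeffc} and~\eqref{coeffcinv}.
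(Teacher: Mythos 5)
Your proposal follows essentially the same route as the paper: it invokes the explicit relations from~\cite{BJN} expressing $\aff{B}^{FC}_{n+1}(q)$ and $\aff{D}^{FC}_{n+2}(q)$ in terms of $\touch{\Gen}_n(q)$, evaluates the numerators at roots of unity exactly as in the proof of Proposition~\ref{prop:exactperiodcaffine}, and obtains the minimal period as the least common multiple of the orders of the surviving poles via Corollary~\ref{coro:partialfractionperiod}, using $(2n+1,n+1)=1$ and $2m+1\mid n+1$. The only difference in detail is that in the actual relations the additional roots of unity arise from separate additive terms, namely $q^{2(2n+1)}/(1-q^{2n+1})$ in type $\aff{B}_{n+1}$ and $2q^{3(n+1)}/(1-q^{n+1})$ in type $\aff{D}_{n+2}$, rather than from any alteration of the parity condition in the $\aff{C}$-type numerator; this only simplifies the bookkeeping you describe, since the coefficients involved are nonnegative and hence no cancellation can occur at shared poles.
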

\begin{proof}
In~\cite{BJN}, the following identity was proved:
\begin{equation}
\label{eq:Btilde}
 \aff{B}^{FC}_{n+1}(q)\equiv \frac{2q^{n+1}\touch{\Gen}_n(q)}{1-q^{n+1}}+\frac{(2n+3)q^{2n+4}}{1-q}+\frac{q^{2(2n+1)}}{1-q^{2n+1}}.
\end{equation}
Thanks to the proof of Proposition~\ref{prop:exactperiodcaffine}, the minimal period corresponding to the first term in \eqref{eq:Btilde} is $2m+1$. Moreover, the period corresponding to the second term is trivially equal to $1$, and the conclusion follows by noting that the minimal period corresponding to the third term is $2n+1$, which is relatively prime to $2m+1$.

Moreover, we have the following expression proved  in~\cite{BJN}:
\begin{equation}
\label{eq:Dtilde}
 \aff{D}^{FC}_{n+2}(q)\equiv \frac{4q^{n+1}\touch{\Gen}_n(q)}{1-q^{n+1}}+\frac{(2n+6)q^{2n+5}}{1-q}+\frac{2q^{3(n+1)}}{1-q^{n+1}}.
\end{equation}
The minimal periods corresponding to each term are $2m+1,1$ and $n+1$ respectively. Since $2m+1$ divides $n+1$, the conclusion follows.
\end{proof}

  The case of involutions in these types is easily derived by the same methods, and is left to the interested reader.

\section{Exact and asymptotic evaluations}
\label{section:num}

In this section, we use our results to give some explanations to numerical observations regarding the repartition of the number of FC elements on a period. Let us start with the example of type $\Aaff_{n-1}$ with $n=10$. Thanks to Proposition~\ref{prop:exactperiodaaffine}, we know that the minimal (ultimate) period of the growth function $\left(a_l^{(10)}\right)_{\ell\geq 0}$ is equal to $10$.

 The numbers $a_l^{(10)}$ for $l=1,2,\ldots ,10$ modulo 10 are given by
\[
18450, 18500, 18450, 18500, 18452, 18500, 18450, 18500, 18450, 18502
\]
in the periodic range. There are very small variations between these values, which will be explained by the results of this section.
\smallskip
 
Given two integers $r>0,l\geq0$, the {\em Ramanujan sum} $Ram_{r}(l)$ (see~\cite{ArithmeticalFunctions,TothNotes}) is defined as the sum of $l$th powers of the primitive $r$th roots of unity
\[\displaystyle Ram_{r}(l):=\sum_{1\leq j\leq r\atop (j,r)=1}\xi_r^{lj}.\]

\begin{proposition}
\label{prop:ramanujanaaffine}
In type $\Aaff_{n-1}$, the growth function satisfies for  any large enough integer $l$: 
\begin{equation}\label{ramansum}
a_l^{(n)}=\frac{1}{n}\sum_{d|n\,,\,d>1}\left(\bi{2d}{d}-2\right)Ram_{\frac{n}{d}}(l).
\end{equation}
\end{proposition}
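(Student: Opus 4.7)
The plan is to extract the coefficients of the partial fraction decomposition obtained in the proof of Proposition~\ref{prop:exactperiodaaffine} and regroup the resulting sum according to the value of $(n,j)$, which brings in the Ramanujan sums.

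First, starting from~\eqref{fractionexpansiona} together with the evaluation $a_{n,j}=\binom{2d}{d}-2$ (with $d=(n,j)$) obtained in~\eqref{coeffa}, I would observe that the equivalence of power series $f\equiv g$ preserves coefficients from some index on. Expanding each $\frac{1}{1-q\xi_n^j}=\sum_{l\geq0}\xi_n^{jl}q^l$ and reading off the coefficient of $q^l$ for $l$ large enough gives
\[
a_l^{(n)}=\frac{1}{n}\sum_{j=0}^{n-1}\left(\binom{2(n,j)}{(n,j)}-2\right)\xi_n^{jl}.
\]

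Next, I would partition the index set $\{0,1,\ldots,n-1\}$ according to the value $d=(n,j)$, which ranges over divisors of $n$. Writing $j=dk$ with $(k,n/d)=1$, one has $\xi_n^{jl}=\xi_{n/d}^{kl}$, and $k$ runs over a complete set of representatives modulo $n/d$ coprime to $n/d$; hence
\[
\sum_{j:(n,j)=d}\xi_n^{jl}=\sum_{1\leq k\leq n/d,\,(k,n/d)=1}\xi_{n/d}^{kl}=Ram_{n/d}(l),
\]
the case $d=n$ giving $Ram_1(l)=1$ in agreement with the contribution $j=0$. Substituting back yields
\[
a_l^{(n)}=\frac{1}{n}\sum_{d\mid n}\left(\binom{2d}{d}-2\right)Ram_{n/d}(l).
\]
Finally, the term $d=1$ contributes $\binom{2}{1}-2=0$, so it can be removed from the sum, yielding exactly~\eqref{ramansum}.

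There is no real obstacle here: the argument is a direct manipulation of the partial fraction expansion already established in Proposition~\ref{prop:exactperiodaaffine}, combined with the very definition of $Ram_r(l)$. The only point requiring a little care is the correct handling of the extremal divisors $d=1$ (which vanishes) and $d=n$ (which corresponds to $j=0$ and gives the value $Ram_1(l)=1$), both of which fit the pattern cleanly.
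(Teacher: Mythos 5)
Your proposal is correct and follows essentially the same route as the paper: extract the coefficient of $q^l$ from the partial fraction expansion~\eqref{fractionexpansiona} (valid for $l$ large since $\equiv$ only ignores a polynomial), then group the indices $j$ by $d=(n,j)$ to recognize the Ramanujan sums, with the $d=1$ term vanishing because $\binom{2}{1}-2=0$. The only cosmetic difference is that the paper writes the coefficient with $\xi_n^{-lj}$, which gives the same sum since $a_{n,j}$ depends only on $(n,j)$ and $j\mapsto n-j$ permutes the index set.
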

\begin{proof}
From~\eqref{coeffa}, we see that the coefficient $a_{n,j}$ in \ref{fractionexpansiona} only depends on $d:=(n,j)$. Therefore by taking the coefficients of $q^l$  on both sides of\eqref{fractionexpansiona}, we obtain:
\begin{eqnarray*}
a_l^{(n)}&=&\frac{1}{n}\sum_{j=1}^{n}a_{n,j}\xi_n^{-lj}\\
&=&\frac{1}{n}\sum_{d|n}\left(\bi{2d}{d}-2\right)\sum_{1\leq j'\leq n/d\atop (j',n/d)=1}(\xi_{n/d}^{-j'})^l,
\end{eqnarray*}
where we set $j'=j/d$ and wrote $\xi_n^{-lj}=\xi_{n/d}^{-lj'}$. This is the desired expression (note that the term for $d=1$ vanishes).
\end{proof}

Now all Ramanujan sums are obviously bounded by $n$. Moreover, the dominant term in the sum~\eqref{ramansum} is given by $d=n$, for which the Ramanujan sum is constant equal to $1$.  As the next dominant term is given by $d=\lfloor n/2\rfloor$, we can write by using Stirling's asymptotic formula for the factorials:
$$a_l^{(n)}=\frac{\bi{2n}{n}}{n}(1+\mbox{O}(n\,2^{-n})),\;\;n\to+\infty.$$
We deduce that for $n$ and $l$ large enough, the number $a_l^{(n)}$ is closed to the mean value of the growth function $\left(\bi{2n}{n}-2\right)/n$, which is $18475.4$ when $n=10$.

\medskip
We have the following analogous result in type $\aff{C}_{n}$.
\begin{proposition}\label{prop:ramanujancaffine}
Write $n+1=2^\alpha(2m+1)$. For any large enough integer $l$, the growth function in type $\aff{C}_{n}$ satisfies:
\begin{equation}
\label{ramansumc}
c_l^{(n)}=2n+\frac{1}{4(n+1)}\sum_{u|2m+1}2^{u2^{\alpha+1}}Ram_{\frac{2m+1}{u}}(l).
\end{equation}
\end{proposition}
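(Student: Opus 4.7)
The proof will closely parallel that of Proposition~\ref{prop:ramanujanaaffine} for type $\Aaff$. The starting point is the partial fraction decomposition \eqref{eq:exptypectilde}:
\[
\aff{C}_{n}^{FC}(q)\equiv\frac{1}{n+1}\sum_{j=0}^{n}\frac{c_{n,j}}{1-q\xi_{n+1}^{j}}+\frac{2n}{1-q},
\]
together with the explicit formula \eqref{eq:coeffc} for $c_{n,j}$, namely $c_{n,j}=\tfrac{1}{4}(1-(-1)^{(n+1)/d})^{2d}$ with $d=(n+1,j)$. Extracting the coefficient of $q^l$ gives $c_l^{(n)}=2n+\frac{1}{n+1}\sum_{j=0}^{n}c_{n,j}\,\xi_{n+1}^{-lj}$.

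The key observation is that $c_{n,j}$ vanishes unless $(n+1)/d$ is odd, which (since $n+1=2^\alpha(2m+1)$) is equivalent to $2^\alpha\mid d$. I would then write $d=2^\alpha u$ with $u\mid 2m+1$, noting that in this case $c_{n,j}=\tfrac{1}{4}\cdot 2^{2d}=\tfrac{1}{4}\cdot 2^{u2^{\alpha+1}}$, which depends only on $u$ and not on the specific $j$.

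Next, I would group the sum over $j$ by the value of $d=(n+1,j)$. Reindexing $j=dj'$ with $1\le j'\le (n+1)/d$ and $(j',(n+1)/d)=1$ yields
\[
\sum_{\substack{1\le j\le n+1\\(n+1,j)=d}}\xi_{n+1}^{-lj}=\sum_{\substack{1\le j'\le (n+1)/d\\ (j',(n+1)/d)=1}}\xi_{(n+1)/d}^{-lj'}=Ram_{(n+1)/d}(l),
\]
where in the last step I use that Ramanujan sums are real (invariant under conjugation) to drop the sign in the exponent. Combining, and substituting $d=2^\alpha u$ (so that $(n+1)/d=(2m+1)/u$), gives precisely
\[
c_l^{(n)}=2n+\frac{1}{4(n+1)}\sum_{u\mid 2m+1}2^{u2^{\alpha+1}}\,Ram_{\frac{2m+1}{u}}(l).
\]

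There is no serious obstacle here beyond careful bookkeeping: the argument is essentially a rewriting of the partial fraction expansion in terms of Ramanujan sums, mirroring step by step the proof of Proposition~\ref{prop:ramanujanaaffine}. The only point requiring care is correctly identifying which divisors of $n+1$ contribute (namely those $d$ with $2^\alpha\mid d$) and parameterizing them as $d=2^\alpha u$ with $u\mid 2m+1$, so that the exponent of $2$ in $c_{n,j}$ comes out to $u2^{\alpha+1}$ as stated.
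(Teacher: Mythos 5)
Your proposal is correct and follows essentially the same route as the paper: the paper likewise extracts the coefficient of $q^l$ from \eqref{eq:exptypectilde} using \eqref{eq:coeffc}, groups the $j$'s by $d=(n+1,j)$ to produce Ramanujan sums, and then observes that only $d=2^\alpha u$ with $u\mid 2m+1$ contribute, giving the exponent $u2^{\alpha+1}$. Your extra bookkeeping (the sign in the exponent handled via the symmetry of Ramanujan sums, and the explicit reindexing $j=dj'$) is exactly the detail the paper leaves implicit by referring to the proof of Proposition~\ref{prop:ramanujanaaffine}.
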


\begin{proof}
By proceeding as in the proof of Proposition~\ref{prop:ramanujanaaffine} and using this time~\eqref{eq:coeffc} and~\eqref{eq:exptypectilde}, we get:
\[
c_l^{(n)}=2n+\frac{1}{4(n+1)}\sum_{d|n+1}\left(1-(-1)^{\frac{n+1}{d}}\right)^{2d}Ram_{\frac{n+1}{d}}(l).
\]
To finish, notice that a term in the sum is zero unless $d=2^\alpha u$ for $u$ a divisor of $2m+1$.  
\end{proof}

This shows that again, for large $n$, in the periodic range, the number $c_l^{(n)}$ is close to the mean value $2n+4^n/(n+1)$.
\medskip

It is possible to write the same kinds of expressions regarding the number of FC involutions of length $\ell$ in all affine types. We will not give details here, and we just mention that the same conclusions occur regarding the repartitions of the values on a period. 

We shall simply indicate a striking observation which concerns the case of type $\Aaff_{n-1}$: there exists a simple relation between growth functions for FC elements and FC involutions (for long elements). For all $m$, and large enough $l$, there holds
$$\overline{a}_l^{(2m)}=\begin{cases}
 a_{l/2}^{(m)}+2\chi(m|l/2)&\mbox{if } l\mbox{ and } m\mbox{ are even}, \\
 a_{(l+m)/2}^{(m)}+2\chi(m|(l+m)/2) &\mbox{if } l\mbox{ and } m\mbox{ are odd},\\ 
0 & \mbox{if } l\mbox{ and } m\mbox{ have opposite parity}. \end{cases}$$

Here $\chi(Y)$ is the \emph{true-false} function on the property $Y$, which is equal to $1$ if $Y$ is true, and $0$ otherwise. To prove these relations, replace $q$ by $q^2$ in~\eqref{eq:newAaffine} and take $n=2m$ in~\eqref{invaaffinefg}. This yields
$$\aff{A}_{2m-1}^{FCI}(q)\equiv q^{m^2}\left(\aff{A}_{n-1}^{FC}(q^2)+\frac{2}{1-q^{2m}}\right),$$
and we can conclude by noting that $m^2$ is congruent to $0$ (\emph{resp.} $m$) modulo $2m$ if $m$ is even (\emph{resp.} odd).

\section{A cyclic sieving phenomenon}
\label{sec:csp}

Let $X$ be a finite set endowed with the action of a finite cyclic group $C=\left<c\right>$ of order $n$. Let also $P$ be a polynomial in $\mathbb{N}[q]$. Denote by $X^g$ the subset of elements of $X$ fixed by $g\in C$, and recall that $\xi_n:=\mbox{e}^{2i\pi/n}$. Then the triple $(X,C,P)$ exhibits the {\em cyclic sieving phenomenon} (see ~\cite{reiner2004cyclic, sagan2010cyclic}) if 
\begin{equation}
\label{eq:csp}
P(\xi_n^j)=|X^{c^j}|\quad\text{for any }j\in\{0,\ldots,n-1\}.
\end{equation}

Here we take for $X$ the set $\touch{\Cylset}_n$ of  lattice paths defined in Section~\ref{sec:heapsfullycomm}, where the cyclic action is generated by the rotation $\mathbf{r}$ which rotates paths one unit to the right. Finally we choose as polynomial $\touch{\Cyl}_n(q)$ which enumerates the paths in $\touch{\Cylset}_n$  according to their area.

\begin{proposition}
The triple $(\touch{\Cylset}_n,\left<\mathbf{r}\right>,\touch{\Cyl}_n(q))$ exhibits the cyclic sieving phenomenon.
\end{proposition}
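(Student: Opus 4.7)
The plan is to verify the cyclic sieving identity \eqref{eq:csp} for each $j \in \{0,\ldots,n-1\}$ by computing both sides independently and showing they both equal $\binom{2d}{d}$, where $d := (n,j)$.

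For the polynomial side $\touch{\Cyl}_n(\xi_n^j)$, the key observation is that the two expressions for $\aff{A}_{n-1}^{FC}(q)$ given by \eqref{eq:Aaffine} and Proposition~\ref{prop:aaffine} together force
\[
\frac{\touch{\Cyl}_n(q)-2}{1-q^n} \equiv \frac{\qbi{2n}{n}{q}-2}{1-q^n},
\]
so that $\touch{\Cyl}_n(q) - \qbi{2n}{n}{q}$ is divisible by $1-q^n$ in $\mathbb{C}[q]$. Evaluating at $q = \xi_n^j$ therefore yields $\touch{\Cyl}_n(\xi_n^j) = \qbi{2n}{n}{\xi_n^j}$, and since $\xi_n^j = \xi_{2n}^{2j}$, Lemma~\ref{lemma:qbinrootunity} applied to $\qbi{2n}{n}{\xi_{2n}^{2j}}$ produces the value $\binom{2d}{d}$.

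For the combinatorial side $|\touch{\Cylset}_n^{\mathbf{r}^j}|$, I would identify a path in $\touch{\Cylset}_n$ with its sequence of $n$ steps $w \in \{D, H_1, H_2, U\}^n$. The conditions that the path returns to its starting height, stays weakly above the $x$-axis, and touches it force the starting height $i$ to be uniquely determined by $w$ (as the absolute value of the minimum of the partial vertical sums) and force the number of $U$-steps to equal the number of $D$-steps in $w$; conversely, every such word yields a valid path. Under this identification the rotation $\mathbf{r}$ corresponds to cyclic shift of words, and $w$ is fixed by $\mathbf{r}^j$ precisely when it has period dividing $d = (n,j)$, i.e., $w = v^{n/d}$ for some $v \in \{D,H_1,H_2,U\}^d$ whose count of $D$s equals its count of $U$s.

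Counting such words $v$ of length $d$ is a routine constant-term computation: weighting $U \mapsto x$, $D \mapsto x^{-1}$, $H_1, H_2 \mapsto 1$, the number of length-$d$ words with as many $U$s as $D$s is the constant term of $(x + x^{-1} + 2)^d = (x^{1/2} + x^{-1/2})^{2d}$, namely $\binom{2d}{d}$. This matches the polynomial side and finishes the proof. The main subtlety lies in the very first step: although $\touch{\Cyl}_n(q)$ and $\qbi{2n}{n}{q}$ are genuinely distinct polynomials of different degrees, they must agree at every $n$th root of unity—precisely the phenomenon already recorded in Remark~\ref{rk:hanusajones}—and it is this coincidence that turns the algebraic identification into the desired counting statement.
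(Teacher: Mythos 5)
Your proposal is correct and follows essentially the same route as the paper: the polynomial side is evaluated by observing (as in Remark~\ref{rk:hanusajones}) that $\touch{\Cyl}_n(q)$ and $\qbi{2n}{n}{q}$ differ by a multiple of $1-q^n$ and then applying Lemma~\ref{lemma:qbinrootunity}, while the fixed-point side reduces to paths (equivalently, balanced step words) of period dividing $(n,j)$, giving $\binom{2(n,j)}{(n,j)}$. Your word encoding and constant-term computation merely make explicit the count $|\touch{\Cylset}_d|=\binom{2d}{d}$ that the paper states without proof, so the arguments coincide in substance.
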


\begin{proof}
We will evaluate both sides of~\eqref{eq:csp}. For the r.h.s., we need to count paths fixed by a power $\mathbf{r}^j$. First notice that $\mathbf{r}^j$ and $\mathbf{r}^{(n,j)}$ generate the same subgroup of $C$ (they have the same order $n/(n,j)$), hence it is equivalent to count paths fixed by $\mathbf{r}^{(n,j)}$. Such paths are clearly concatenations of $n/(n,j)$ identical paths of length $n$, where  the repeated portion is allowed to be any element of $\touch{\Cylset}_{(n,j)}$. Since this last set has cardinality $\binom{2(n,j)}{(n,j)}$, we obtain 

\[\left|\touch{\Cylset}_n^{\mathbf{r}^j}\right|=\binom{2(n,j)}{(n,j)}.\]

Now we need to evaluate the polynomial $\touch{\Cyl}_n(q)$ at $q=\xi_n^j$. Note that this is \emph{a priori} not obvious, since the polynomial does not possess a nice expression as far as we know: it can only be computed recursively thanks to certain functional equations from~\cite{BJN}.

  The idea is to use the observation from Remark~\ref{rk:hanusajones}: we have for a certain polynomial $Q(q)$ the equality \[\touch{\Cyl}_n(q)=\qbi{2n}{n}{q} + (1-q^n)Q(q),\]
so both polynomials $\touch{\Cyl}_n(q)$ and $\qbi{2n}{n}{q}$ take the same values at $n$th roots of unity.  These were calculated in Lemma~\ref{lemma:qbinrootunity}, and match indeed the values found for the fixed points.
\end{proof}

\bibliographystyle{plain}
\bibliography{fullycommut}

\end{document}